\renewcommand{\epsilon}{\varepsilon}
\numberwithin{equation}{section}
\newtheoremstyle{thmlemcorr}{10pt}{10pt}{\itshape}{}{\bfseries}{.}{10pt}{{\thmname{#1}\thmnumber{ #2}\thmnote{ (#3)}}}
\newtheoremstyle{thmlemcorr*}{10pt}{10pt}{\itshape}{}{\bfseries}{.}\newline{{\thmname{#1}\thmnumber{ #2}\thmnote{ (#3)}}}
\newtheoremstyle{defi}{10pt}{10pt}{\itshape}{}{\bfseries}{.}{10pt}{{\thmname{#1}\thmnumber{ #2}\thmnote{ (#3)}}}
\newtheoremstyle{remexample}{10pt}{10pt}{}{}{\bfseries}{.}{10pt}{{\thmname{#1}\thmnumber{ #2}\thmnote{ (#3)}}}
\newtheoremstyle{ass}{10pt}{10pt}{}{}{\bfseries}{.}{10pt}{{\thmname{#1}\thmnumber{ A#2}\thmnote{ (#3)}}}
\theoremstyle{thmlemcorr}
\newtheorem{theorem}{Theorem}
\numberwithin{theorem}{section}
\newtheorem{lemma}[theorem]{Lemma}
\theoremstyle{thmlemcorr*}
\newtheorem{theorem*}{Theorem}
\newtheorem{lemma*}[theorem]{Lemma}
\newtheorem{corollary*}[theorem]{Corollary}
\newtheorem{proposition*}[theorem]{Proposition}
\newtheorem{problem*}[theorem]{Problem}
\newtheorem{conjecture*}[theorem]{Conjecture}
\theoremstyle{defi}
\theoremstyle{remexample}
\newtheorem{remark}[theorem]{Remark}
\theoremstyle{ass}
\newcommand{\Ecal}{\mathcal{E}}
\newcommand{\Ical}{\mathcal{I}}
\newcommand{\Mcal}{\mathcal{M}}
\newcommand{\Ocal}{\mathcal{O}}
\DeclareMathOperator{\dist}{dist}
\newcommand{\norm}[1]{\|#1\|}
\newcommand{\dd}{\;\mathrm{d}}
\newcommand{\N}{\mathbb{N}}
\newcommand{\R}{\mathbb{R}}
\newcommand{\Z}{\mathbb{Z}}
\newcommand{\weakly}{\rightharpoonup}
\newcommand{\eps}{\epsilon}
\newcommand{\ffi}{\varphi}
\DeclareMathOperator{\SO}{SO}
\DeclareMathOperator{\Sl}{Sl}
\def\Xint#1{\mathchoice 
{\XXint\displaystyle\textstyle{#1}}% 
{\XXint\textstyle\scriptstyle{#1}}% 
{\XXint\scriptstyle\scriptscriptstyle{#1}}% 
{\XXint\scriptscriptstyle\scriptscriptstyle{#1}}% 
\!\int} 
\def\XXint#1#2#3{{\setbox0=\hbox{$#1{#2#3}{\int}$} 
\vcenter{\hbox{$#2#3$}}\kern-.5\wd0}} 
\def\dashint{\,\Xint-}
\newcommand{\rn}[1]{\nabla^\epsilon #1_\epsilon}
\newcommand{\ui}[1]{^{\left(#1\right)}}
\newcommand{\Wmin}{\overline{W}}
\newcommand{\pw}{\mathrm{pw}}
\newcommand{\Affpw}{A_{\pw}(0,L;\R^3)}
\newcommand\restrict[1]{\raisebox{-.5ex}{$|$}_{#1}}
\title[Incompressible elastic strings]{Asymptotic variational analysis of \\ incompressible elastic strings}
\author{Dominik Engl}
\address{Mathematisch Instituut, Universiteit Utrecht, Postbus 80010, 3508 TA Utrecht, The Netherlands}
\email{D.M.Engl@uu.nl}
\author{Carolin Kreisbeck}
\address{Mathematisch Instituut, Universiteit Utrecht, Postbus 80010, 3508 TA Utrecht, The Netherlands}
\email{C.Kreisbeck@uu.nl}
\begin{document}

\maketitle
\thispagestyle{empty}

%%%%%%%%%%%%%%%%%%%%%% ABSTRACT %%%%%%%%%%%%%%%%%%%%%%%%%%%%%%%%%
\maketitle

 \begin{abstract}  
 \vspace{-12pt}  
Starting from three-dimensional nonlinear elasticity under the restriction of incompressibility, we derive reduced models to capture the behavior of strings in response to external forces.
Our $\Gamma$-convergence analysis of the constrained energy functionals in the limit of shrinking cross sections gives rise to explicit one-dimensional limit energies. 
The latter depend on the scaling of the applied forces. The effect of local volume preservation is reflected either in their energy densities through a constrained minimization over the cross-section variables or in the class of admissible deformations. 
Interestingly, all scaling regimes allow for compression and/or stretching of the string.  
The main difficulty in the proof of the $\Gamma$-limit is to establish recovery sequences that accommodate the nonlinear differential constraint imposed by the incompressibility. 
To this end, we modify classical constructions in the unconstrained case with the help of an inner perturbation argument tailored for $3$d-$1$d dimension reduction problems.                        
\vspace{8pt}

 \noindent\textsc{MSC (2010):} 49J45 (primary) $\cdot$ 74K05 
 
 \noindent\textsc{Keywords:} dimension reduction, $\Gamma$-convergence, incompressibilty, 
 strings.

 \noindent\textsc{Date:} \today.
 \end{abstract}

%%%%%%%%%%%%%%%%%%%%%%%%% INTRODUCTION %%%%%%%%%%%%%%%%%%%%%%
\section{Introduction}

Modern mathematical approaches to applications in materials science result in variational problems with non-standard constraints for which the classical methods of the calculus of variations do not apply. 
Constraints involving non-convexity, differential expressions and/or nonlocal effects are known to be particularly challenging.

In the context of the analysis of thin objects, interesting effects may occur due to the interaction between restrictive material properties and the lower-dimensional structure of the objects. We mention here a few selected examples:  
thin (heterogenous) films and strings subject to linear first-order partial differential equations, which are general enough to cover applications in nonlinear elasticity and micromagnetism at the same time, are studied in~\cite{Kre17, KrK16, KrR15}, cf.~also~ \cite{GiJ97, Kre13}; pointwise constraints on the stress fields appear naturally in models of perfectly plastic plates \cite{Dav14, DaM13}; for work on lower-dimensional material models that involve issues related to non-interpenetration of matter and (global) invertibility, we refer for instance to \cite{LPS15, OlR17, Sch07, Zor06}; physical growth conditions, which guarantee orientation preservation of deformation maps, 
have been taken into account in models of thin nematic elastomers~\cite{AgD17a} and von K\'arm\'an type rods and plates~\cite{DaM12, MoS12}.

This paper is concerned with $3$d-$1$d dimension reduction problems in nonlinear elasticity with incompressibility  -  a determinant constraint on the deformation gradient, which ensures local volume preservation, and is ideal to model e.g.~rubber-like materials~\cite{Ogd72}. 
To be more specific, we provide an ansatz-free derivation of reduced models for incompressible thin tubes by means of $\Gamma$-convergence techniques (see~\cite{Bra02, Dal93} for a comprehensive introduction). We take the limit of vanishing cross section, considering external loading of the order of magnitude that gives rise to string type models. 

The analogous problem in the $3$d-$2$d context, meaning for incompressible membranes, was solved independently by Trabelsi~\cite{Tra06} and Conti \& Dolzmann~\cite{CoD06} based on different approaches. To overcome the difficulty of accommodating the nonlinear differential constraint when constructing recovery sequences,~\cite{CoD06} involves the construction of suitable inner variations. This idea has been applied in the analysis of incompressible Kirchhoff and von K\'arm\'an plates~\cite{CoD09, ChL13}, and lends inspiration to this paper, where we adapt it for $3$d-$1$d reductions.

The first results in the literature to use $\Gamma$-convergence techniques to deduce reduced models for thin objects go back to the 1990s, with the seminal works by Acerbi, Buttazzo \& Percivale \cite{ABP91} on strings and Le Dret \& Raoult \cite{LeR95} on membranes. Notice that in both papers, the authors start from unconstrained energy functionals whose energy densities satisfy standard growth. 
Before that, common techniques for gaining quantitative insight into thin structures relied mostly on asymptotic expansion methods, and were applied in the setting of linearized elasticity, see e.g.~\cite{Cia97, TrV96}. 

Over the last two decades, the fundamental results in~\cite{ABP91, LeR95} have been generalized in multiple directions. This includes for instance the study of membrane theory with Cosserat vectors~\cite{BFM03, BFM09}, curved strings~\cite{Sca06}, inhomogeneous thin films~\cite{BFF00}, thin structures made of periodically heterogeneous material \cite{BaB06, BaF05, LeM05}, or junctions between membranes and strings~\cite{FeZ19}. 

\subsection{Problem formulation}

For small $\eps>0$, let $\Omega_\eps:=(0,L)\times \eps\omega$ with $L>0$ and a bounded Lipschitz domain $\omega\subset \R^2$ represent the reference configuration of a thin unilaterally extended body. Up to translation, we may always assume that the origin lies in $\omega$. 

The starting point of our analysis is a three-dimensional model in hyperelasticity with an energy functional (per unit cross section) of the form
\begin{align*}
	\Ecal_\epsilon(v) = \frac{1}{\eps^2}\int_{\Omega_\epsilon} W(\nabla v) \dd y - \frac{1}{\eps^2}\int_{\Omega_\epsilon} f_\epsilon\cdot v \dd y,\quad v\in H^1(\Omega_\epsilon;\R^3);
\end{align*}
here, $f_\eps \in L^2(\Omega_\eps;\R^3)$ are external forces and $W$ is a constrained stored elastic energy density enforcing  incompressibility, precisely,
\begin{align}\label{WW0}
	W: \R^{3\times 3}\to [0,\infty],\quad F\mapsto\begin{cases} W_0(F) &\text{ if } \det F = 1,\\ \infty &\text{ otherwise,}\end{cases}
\end{align} 
with $W_0 :\R^{3\times 3}\to [0,\infty)$ a continuous function with suitable growth behavior. We give more details on the exact assumptions on $W_0$ in Section~\ref{subsec:hypotheses}, see (H1)-(H3).
In this model, the observed deformations of the thin object in response to external forces correspond to minimizers (or, if the latter do not exist, low energy states) of $\Ecal_\eps$.

To derive reduced one-dimensional models that capture the asymptotic behavior of these minimizers, it is technically convenient to work with functionals defined on $\eps$-independent spaces, which can be achieved by a classical rescaling argument in the cross section.
Indeed, let $u_\eps(x):=v(y)$ for $v\in H^1(\Omega_\eps;\R^3)$ with the parameter transformation $y=(x_1, \eps x_2, \eps x_3)$ for $x\in \Omega:=\Omega_1$, and 
suppose for simplicity that $f_\eps$ is independent of the cross-section variables. Then,
 $\Ical_\eps(u_\eps) = \Ecal_\eps(v)$, where
\begin{align*}
	 \Ical_\eps(u) := \int_{\Omega} W\bigl(\rn{u}\bigr) \dd x - \int_{\Omega} f_\eps \cdot u \dd x, \qquad u\in H^1(\Omega;\R^3),
\end{align*}
with $\rn{u} = (\partial_1 u|\tfrac{1}{\epsilon}\partial_2 u|\tfrac{1}{\epsilon} \partial_3 u) $ the rescaled gradient of $u$. 

In analogy to the well-known facts in the context of compressible materials (see e.g.~\cite{FJM06}), here as well, the scaling behavior of $\Ical_\eps$ depends strongly on the external forces $f_\eps$. 
Whenever 
 $f_\epsilon$ is of order $\epsilon^\alpha$ for some $\alpha\geq 0$, then $\inf_{u\in H^1(\Omega;\R^3)}\Ical_\eps(u)$ behaves like $\epsilon^\beta$ with $\beta = \alpha$ if $\alpha\leq 2$ and $\beta=2\alpha-2$ if $\alpha \geq 2$. Depending on these scalings, one has to expect qualitatively different limit models, falling into the categories of string theory ($\alpha=0$), rod theories ($\alpha=2$ and $\alpha =3$) or other intermediate theories.

Since this article deals with the regimes $\alpha<2$ (the cases $\alpha\geq 2$ are addressed in a different work, see~\cite{EnK19}), it is natural to consider in the following the rescaled functionals $\Ical_\eps^\alpha: H^1(\Omega;\R^3)\to [0,\infty]$ for $\alpha\in [0,2)$ defined by
\begin{align}\label{energy_all_scaling_regimes}
		\Ical_\eps^\alpha(u) = \frac{1}{\epsilon^\alpha}\int_\Omega W(\nabla^\epsilon u) \dd x, \quad u\in H^1(\Omega;\R^3);
\end{align}
notice that one may, without loss of generality, omit here the term describing work due to the external forces, for it is merely a continuous perturbation of the total (rescaled) elastic energy.

\subsection{Statement of the main results.}
The new contribution of this paper is a complete characterization of the $\Gamma$-limits of sequences $(\Ical^\alpha_\epsilon)_{\epsilon}$ as in \eqref{energy_all_scaling_regimes} for $\eps\to 0$.  

To be more precise, we prove that under suitable assumptions,
 $(\Ical^\alpha_\eps)_{\eps}$ $\Gamma$-converges with respect to the weak topology in $H^1(\Omega;\R^3)$ to $\Ical^\alpha: H^1(\Omega;\R^3)\to [0, \infty]$ given for $\alpha=0$ by
\begin{align}\label{Ical0}
	\Ical^0(u)= \begin{cases}  |\omega| \displaystyle\int_0^L \Wmin^{\rm c}(u'(x_1)) \dd x_1 &\text{ if } u \in H^1(0,L;\R^3), \\ \infty &\text{ otherwise.}\end{cases}
\end{align}
and for $\alpha\in (0,2)$ by
\begin{align}\label{Ical_alpha}
	\Ical^\alpha(u) = \begin{cases}  0 &\text{ if  $u \in H^1(0,L;\R^3)$ with $\Wmin^{\rm c}(u'(x_1))=0$ for a.e.~$x_1\in (0, L)$,}  \\ \infty &\text{ otherwise,}\end{cases}
\end{align}
respectively, cf.~Theorem~\ref{theo:strings=0} and~\ref{theo:strings>0} for all details.

The reduced energy density $\Wmin$ results from minimizing out the cross-section variables from $W$, that is,
\begin{align*}
\Wmin(\xi) := \min_{A\in \R^{3\times 2}} \Wmin \bigl((\xi|A)\bigr) = \min_{A\in\R^{3\times 2}, \,\det (\xi| A)=1} W_0 \bigl((\xi|A)\bigr), \qquad \xi\in \R^3\setminus \{0\},
\end{align*}
cf.~\eqref{overlineW}, while the convexification $\Wmin^{\rm c}$ of $\Wmin$ reflects a relaxation process. 

The representation formulas~\eqref{Ical0} and~\eqref{Ical_alpha} indicate that the two regimes $\alpha=0$ and $\alpha\in (0,2)$ give rise to qualitatively different reduced one-dimensional models. Whereas the latter admits only restricted deformations of the thin object, which can however be obtained with zero energy, the former allows us for any deformation of the string at finite energetic cost.

Despite their differences, both cases share a feature that may seem surprising at first. In fact, the incompressibilty constraint imposed on the three-dimensional elasticity models does not carry over to the reduced ones, in the sense that admissible deformations are not length preserving in general, but can undergo compression and/or stretching. 
For a similar observation in the context of incompressible membranes, see~\cite{CoD06}.

\subsection{Approach and techniques}  	The proofs for the cases $\alpha=0$ and $\alpha\in (0,2)$ can be found in Section~\ref{sec:alpha=0} and Section~\ref{sec:alpha>0}, respectively. 
Overall, our idea is to combine tools from~\cite{CoD06} on $3$d-$2$d dimension reduction for incompressible membranes and the 
references \cite{ABP91, Sca06}, where the authors derive one-dimensional models for strings without  volumentric constraints.

In both regimes, compactness and the liminf-inequalities are straightforward to show, as they follow immediately from the corresponding results for the unconstrained problems.
However, the construction of recovery sequences is more delicate.  

The difficulty is to accommodate the incompressibility condition, while approximating the desired limit deformation in an energetically optimal way.  To achieve this, we take the recovery sequences from the compressible case - i.e., the ones from~\cite{ABP91} if $\alpha=0$, and from~\cite{Sca06} for $\alpha\in (0, 2)$ - as a basis, and modify them with the help of an inner perturbation argument tailored for $3$d-$1$d dimension reduction. The latter, which is stated in Lemma~\ref{lem:reparam_det=1}, is a key ingredient of the proof. 

In order to apply Lemma~\ref{lem:reparam_det=1},  though, one needs sequences that are sufficiently regular and whose rescaled deformation gradients have determinant close to $1$ up to a small, quantified error. Especially in the string regime $\alpha=0$, this requires some technical effort. Indeed, with the help of B\'ezier curves, we establish a mollification argument for piecewise affine functions of one variable, which, amongst other useful properties, yields uniform bounds on the derivatives, see Lemma~\ref{prop:bezier_mollifying}. 
Moreover, we construct tailored moving frames along the resulting smooth curves in order to guarantee that fattening them to tubes results in deformed configurations that are almost (with controlled errors) locally volume-preserving.

\newpage
\section{Preliminaries}
To start with, we introduce notations and collect a few technical tools.
\subsection{Notation}
	The following notational conventions are used throughout the paper:  Let $a\cdot b$ be the standard inner product of two vectors $a,b\in\R^3$, and 
	$e_1, e_2, e_3$ the standard unit vectors in $\R^3$. 
	On the space $\R^{m\times n}$ of real-valued $m\times n$ matrices, we denote the Forbenius norm by $|\cdot|$. 
	
	Moreover, the closure of a given set $U\subset \R^n$ is denoted by $\overline U$, whereas $U^{\rm c}$ stands for the convex hull of $U$. Accordingly, the convex envelope of a function $f: \R^n\to \R$ is $f^{\rm c}$.  For the zero level set of $f$, we use $L_0(f)$.
	
	The partial derivative of $v:U \to \R^m$, where $U\subset \R^n$ is open, with respect to the $i$-th variable is denoted by $\partial_i v$, and gradients by $\nabla v$. If $v$ depends only on one real variable, meaning if $n=1$, 
	we simplify $\partial_1 v$ to $v'$.  The rescaled gradient of $v:U\subset \R^3 \to \R^3$ is given by
	\begin{align}\label{rescaled}
			\nabla^\epsilon v := \bigl(\partial_1 v \big| \tfrac{1}{\epsilon}\partial_2 v\big| \tfrac{1}{\epsilon}\partial_3  v\bigr).
	\end{align}

	We employ standard notation for Lebesgue and Sobolev spaces as well as for spaces of continuous and $k$-times continuously differentiable functions; in particular, $L^2(U;\R^m)$, $H^1(U;\R^m)$ and $C^k(\overline U;\R^m)$ with $k\in \N_0$ for an open set $U\subset \R^n$. We shall equip the latter with the norm 
	\begin{align*}
	\textstyle \norm{u}_{C^k(\overline U;\R^m)} := \sum_{i=0}^k\max_{x\in \overline U} |\nabla^i u(x)|.
	\end{align*}	
To shorten notation, let $L^2(a,b;\R^m):=L^2((a,b);\R^m)$ and $H^1(a,b;\R^m):=H^1((a,b);\R^m)$ if $U=(a, b)\subset \R$.  

Furthermore, $A_{\rm pw}(I;\R^m)$ with an open interval $I\subset \R$ is defined as the space of continuous piecewise affine functions with values in $\R^m$.

If $\Omega=(0, L)\times \omega$ as in the introduction, without explicit mention, we identify any $u:(0,L)\to \R^3$ with its trivial extension to a function on $\Omega$; in particular, given sufficient regularity, $\partial_1 u$ and $u'$ are used interchangeably.
	
Finally, $\Ocal(\cdot)$ is the well-known Landau symbol. 

	\subsection{Hypotheses and properties of $W_0$ and $\Wmin$}\label{subsec:hypotheses}

Consider the following regularity and growth assumptions for the energy density $W_0 :\R^{3\times 3} \to [0,\infty)$: 
	\begin{itemize}
		\item[(H1)] $W_0$ is continuous on $\R^{3\times 3}$; 
		\item[(H2)] there are constants $C_2, c_2>0$ such that 
			\begin{align*}
				c_2|F|^2 - C_2 \leq W_0(F) \leq C_2(|F|^2 + 1)\quad \text{for all $F\in \R^{3\times 3}$};
			\end{align*}
		\item[(H3)] there are constants $C_3, c_3>0$ such that 
			\begin{align*}
				c_3\,{\rm dist}^2(F,\SO(3)) \leq W_0(F) \leq C_3\,{\rm dist}^2(F, \SO(3))\quad \text{for all $F\in \R^{3\times 3}$}. 
			\end{align*}
	\end{itemize}	
	Clearly, (H3) implies (H2). 
	
 Recalling the definition of $W$ in~\eqref{WW0}, we define $\overline{W}:\R^{3}\to [0, \infty]$ by minimizing out the cross-section variables, that is, 
	\begin{align}\label{overlineW}
			\Wmin(\xi) = \inf_{A \in \R^{3\times 2}} W\bigl((\xi|A)\bigr) = \begin{cases} \inf_{A \in \R^{3\times 2},\,\det(\xi|A) = 1} W_0\bigl((\xi|A)\bigr) & \text{if $\xi\neq 0$,}\\ \infty & \text{otherwise, } \end{cases}
	\end{align} 
		for $\xi\in \R^3$. Notice that the hypotheses (H1) and (H2) guarantee that the infima in~\eqref{overlineW} are attained.
	
	 As we detail next, the convexification $\overline{W}^{\rm c}$ of $\Wmin$ inherits growth properties from $W_0$.
	\begin{lemma}
		Let $W_0$ satisfy (H1) and (H2). There are constants $C_4,c_4>0$ such that 
		\begin{align}\label{convex_energy_quadratic_growth}
			c_4|\xi|^2 - C_4\leq	\Wmin^{\rm c}(\xi) \leq C_4(|\xi|^2 +1) \quad \text{ for all } \xi\in\R^3.
		\end{align}
		In particular, $\Wmin^{\rm c}: \R^3 \to [0,\infty)$ is continuous as a finite-valued convex function.
	\end{lemma}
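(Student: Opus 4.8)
The plan is to establish the two bounds in~\eqref{convex_energy_quadratic_growth} separately, passing through $\Wmin$ and then invoking elementary properties of the convex envelope; the continuity then follows from standard convex analysis.

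\textbf{Upper bound.} First I would observe that the upper bound on $\Wmin^{\rm c}$ is inherited from the upper bound on $\Wmin$, since the convex envelope of a function is dominated by that function, and the convex envelope of $\xi\mapsto C(|\xi|^2+1)$ (which is already convex) is itself. So it suffices to show $\Wmin(\xi)\le C(|\xi|^2+1)$ for all $\xi\in\R^3\setminus\{0\}$. To this end, for a given $\xi\neq 0$ I would exhibit an explicit competitor $A=A(\xi)\in\R^{3\times 2}$ with $\det(\xi|A)=1$ and $|A|\le c\,|\xi|^{-1}$: concretely, complete $\xi/|\xi|$ to an orthonormal basis $\{\xi/|\xi|, b_1, b_2\}$ of $\R^3$ and take $A=(\,|\xi|^{-1/2}b_1\mid |\xi|^{-1/2}b_2\,)$, so that $\det(\xi|A)=|\xi|\cdot|\xi|^{-1}\cdot(\pm1)=\pm1$ — fixing orientation by swapping columns if needed gives determinant exactly $1$ — while $|A|^2=2/|\xi|$. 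Then by (H2),
\begin{align*}
\Wmin(\xi)\le W_0\bigl((\xi|A)\bigr)\le C_2\bigl(|\xi|^2+|A|^2+1\bigr)= C_2\bigl(|\xi|^2 + 2/|\xi| + 1\bigr).
\end{align*}
This is bounded by $C(|\xi|^2+1)$ for $|\xi|$ bounded away from $0$, but blows up as $\xi\to 0$; to remedy this I would instead rescale, taking $A=(t b_1\mid t^{-1}|\xi|^{-1} b_2)$ for a free parameter $t>0$ and optimizing, or more simply note that $\Wmin^{\rm c}$, being convex and finite on a neighborhood of $0$ (e.g. $\Wmin^{\rm c}(0)\le\tfrac12(\Wmin(\xi)+\Wmin(-\xi))<\infty$ for any fixed $\xi\neq0$), is locally bounded, hence the $2/|\xi|$ term only matters away from the origin and is there $\le 2|\xi|$, so the bound $\Wmin^{\rm c}(\xi)\le\Wmin(\xi)\le C_2(|\xi|^2+1+2|\xi|)\le C_4(|\xi|^2+1)$ holds for $|\xi|\ge1$ and local boundedness handles $|\xi|\le1$. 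Patching these gives the upper bound everywhere.

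\textbf{Lower bound.} For the lower bound I would use that $\Wmin^{\rm c}\ge g^{\rm c}$ whenever $\Wmin\ge g$, and that $\xi\mapsto c|\xi|^2-C$ is already convex, so it equals its own convex envelope; hence it suffices to prove $\Wmin(\xi)\ge c_2|\xi|^2 - C_2$, which is immediate from the lower bound in (H2): for $\xi\neq0$ and any admissible $A$, $W_0((\xi|A))\ge c_2|(\xi|A)|^2 - C_2\ge c_2|\xi|^2-C_2$, and taking the infimum over $A$ preserves this; for $\xi=0$ the inequality holds trivially since $\Wmin(0)=\infty$. Passing to convex envelopes then yields $\Wmin^{\rm c}(\xi)\ge c_2|\xi|^2-C_2$ for all $\xi$, i.e. the claimed lower bound with $c_4=c_2$, $C_4\ge C_2$.

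\textbf{Finiteness and continuity.} From the two-sided bound, $\Wmin^{\rm c}$ is finite-valued on all of $\R^3$; a convex function $\R^n\to\R$ that is finite everywhere is automatically continuous (indeed locally Lipschitz), which gives the last assertion. The main obstacle is the upper bound: one must produce competitors $A$ realizing the determinant constraint with controlled Frobenius norm, and handle the degeneration of the natural construction as $\xi\to0$ — this is exactly where the convexity of $\Wmin^{\rm c}$ (as opposed to $\Wmin$ itself, which genuinely blows up at the origin) is used to absorb the singular behavior.
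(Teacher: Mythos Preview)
Your proposal is correct and follows essentially the same route as the paper. Both arrive at the key intermediate bound $\Wmin(\xi)\le C_2(|\xi|^2+2|\xi|^{-1}+1)$ for $\xi\neq 0$ (your explicit competitor $A=(|\xi|^{-1/2}b_1\,|\,|\xi|^{-1/2}b_2)$ in fact realizes the exact minimum the paper computes) and then invoke convexification; you simply spell out the ``after convexification'' step---splitting into $|\xi|\ge1$ and $|\xi|\le1$ and using local boundedness of finite convex functions---where the paper is terse.
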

	\begin{proof}
		Indeed, (H2), along with the observation that
		\begin{align*}
			\min_{A\in\R^{3\times 2},\, \det(\xi|A)=1}|(\xi|A)|^2  = \min_{x,y\in\R^3,\, (x\times y)\cdot \xi=1} |\xi|^2 + |x|^2+ |y|^2 = |\xi|^2 + 2 |\xi|^{-1}
		\end{align*}
		for any $\xi\in\R^3\setminus\{0\}$, allows us to infer that
		\begin{align}\label{bounds_Wbar}
			c_2|\xi|^2-C_2 \leq \Wmin(\xi) \leq C_2(|\xi|^2 + 2|\xi|^{-1} + 1) 
		\end{align}
	 	for all $\xi\in \R^3\setminus \{0\}$, which after convexification gives rise to~\eqref{convex_energy_quadratic_growth}.
 	\end{proof}
	
 	The following lemma collects a few basic properties of the zero level sets of $\Wmin$ and $\Wmin^{\rm c}$.

	\begin{remark}\label{rem:zero_level}
	Suppose that $W_0$ satifies (H1) and (H2).
	
		a) The growth assumption (H3) 
		implies immediately that $L_0(W)=L_0(W_0) = \SO(3)$.	
		
		b) If $W_0$ is frame-indifferent, i.e., $W_0(RF) = W_0(F)$ for all $F\in\R^{3\times 3}$ and any $R\in\SO(3)$, 
		then $\Wmin(\xi)$ with $\xi\in \R^3$ depends de facto only on $|\xi|$, i.e., $W(\xi)=f(|\xi|)$ for $\xi\in \R^3$ with some $f:[0, \infty)\to \R$. 
		Indeed, for any $R\in\SO(3)$ and $\xi\neq 0$,
		\begin{align*}
			\Wmin(R\xi) &=  \min\{W_0((R\xi|A)) :A \in\R^{3\times 2}, \det(R\xi|A) = 1\}\\  
					&=\min\{W_0(R(\xi|A)) : A \in\R^{3\times 2}, \det(R(\xi|A)) = 1\}\\
					&=\min\{W_0((\xi|A)) : A \in\R^{3\times 2}, \det(\xi|A) = 1\} = \Wmin(\xi). 
		\end{align*}
		In this case, $\Wmin^{\rm c} = f^{\rm c}(|\cdot|)$ and $L_0(\overline W^{\rm c}) = L_0(\overline W)^{\rm c} = \{\xi\in \R^3: |\xi|\leq \max_{t\geq 0, t\in L_0(f)} t\}$. 
	
		c) A frame-indifferent single-well energy density $W_0$ vanishing at the identity, 
		has $\SO(3)$ as zero level set of $W_0$. Hence, $L_0(\Wmin) = \{\xi\in \R^3: |\xi|=1\}$ in light of b), and after convexification, 
		\begin{align*}
			L_0(\Wmin^{\rm c}) = L_0(\Wmin)^{\rm c} = \{\xi\in \R^3: |\xi|\leq 1\}.
		\end{align*}  
	\end{remark}
	
\subsection{Technical tools}\label{subsec:tools}

	The following auxiliary result on inner perturbations is a key ingredient for the construction of recovery sequences, both in the regimes $\alpha=0$ and $\alpha\in (0,2)$, since it allows us to modify sequences subject to the incompressibility constraint in an approximate sense into ones that satisfy it exactly.
	
	Analogous techniques applicable to $3$d-$2$d dimension reduction problems were first introduced in~\cite{CoD06} and later exploited in~\cite{CoD09, ChL13}.
	We adapt the method to the $3$d-$1$d context, where perturbing one of the cross-section variables, instead of both,   is enough to realize the desired determinant condition.   
	
	\begin{lemma}[Inner perturbation]\label{lem:reparam_det=1}
		Let $\gamma>0$ and $J\subset J'\subset \R$ be bounded closed intervals such that $0\in J$ and $J$ is compactly contained in the interior of $J'$. Further, let $Q_L:=[0, L]\times J\times J\subset \R^3$ and analogously, $Q_L':=[0, L]\times J'\times J'$, and define $\Pi_3:Q_L\to J$ by $x\mapsto x_3$ for $x\in Q_L$. 
		
		If $(v_\epsilon)_{\epsilon}\subset C^2(Q_L';\R^3)$ such that
		\begin{align}\label{estimates_det}
			\|\det \rn{v} -1\|_{C^1(Q_L')}= \Ocal(\epsilon^\gamma),
		\end{align}
		then there exists a sequence $(\Phi_\eps)_{\eps} \subset C^1(Q_L;Q_L')$ with functions of the form
		\begin{align*}
			\Phi_\eps(x) = (x_1, x_2, \varphi_\eps(x)), \quad x \in Q_L,
		\end{align*}
		where $\varphi_\eps\in C^1(Q_L;J')$ for $\eps>0$ is such that
		\begin{align}\label{est_varphi}
			\|\varphi_\eps- \Pi_3\|_{C^1(Q_L;\R^3)} = \Ocal(\eps^\gamma), 
		\end{align}
		and the perturbed sequence 
		$(u_\eps)_{\eps}\subset C^1(Q_L;\R^3)$ defined by $u_\eps:=v_\eps\circ \Phi_\eps$  satisfies 
		\begin{align}\label{det=1}
			\det \rn{u} = 1 \quad\text{ everywhere in $Q_L$ for $\eps>0$.}
		\end{align}	
	\end{lemma}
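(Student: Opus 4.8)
\textbf{Setup and strategy.} The plan is to construct $\varphi_\eps$ by solving, for each fixed $(x_1,x_2)$, an ODE in the variable $x_3$ so that the determinant of $\nabla^\eps u_\eps$ becomes identically $1$. The key observation is the chain-rule identity for the rescaled gradient: writing $\Phi_\eps(x)=(x_1,x_2,\varphi_\eps(x))$, one computes
\begin{align*}
	\nabla^\eps u_\eps(x) = \nabla^\eps(v_\eps\circ\Phi_\eps)(x) = \bigl(\nabla^\eps v_\eps\bigr)(\Phi_\eps(x))\cdot M_\eps(x),
\end{align*}
where $M_\eps$ is an upper-triangular-type matrix built from the partial derivatives of $\varphi_\eps$; more precisely, since $\Phi_\eps$ only moves the third coordinate and leaves $x_1,x_2$ fixed, the factor picking up the rescaling is such that $\det M_\eps(x) = \partial_3\varphi_\eps(x)$. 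Hence
\begin{align*}
	\det\nabla^\eps u_\eps(x) = \bigl(\det\nabla^\eps v_\eps\bigr)(\Phi_\eps(x))\,\partial_3\varphi_\eps(x).
\end{align*}
Therefore $\det\nabla^\eps u_\eps\equiv 1$ is equivalent to the scalar ODE
\begin{align*}
	\partial_3\varphi_\eps(x) = \frac{1}{\bigl(\det\nabla^\eps v_\eps\bigr)(x_1,x_2,\varphi_\eps(x))}, \qquad \varphi_\eps(x_1,x_2,0)=0,
\end{align*}
with $(x_1,x_2)$ as parameters. I would first record this computation carefully — that is the structural heart of the lemma.

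\textbf{Solving the ODE and staying in $J'$.} By \eqref{estimates_det}, for $\eps$ small the function $g_\eps:=\det\nabla^\eps v_\eps$ satisfies $\|g_\eps-1\|_{C^1(Q_L')}=\Ocal(\eps^\gamma)$, so in particular $g_\eps\geq 1/2$ on $Q_L'$ and $1/g_\eps$ is $C^1$ with $\|1/g_\eps-1\|_{C^1}=\Ocal(\eps^\gamma)$. The right-hand side of the ODE is then $C^1$ in all variables (hence locally Lipschitz in $\varphi$), so Picard–Lindelöf gives a unique local $C^1$ solution depending $C^1$-smoothly on the parameters $(x_1,x_2)$ and on the initial data; since the initial value is $0\in\operatorname{int}J$ and, by Grönwall, $|\varphi_\eps(x)-x_3|\leq C\eps^\gamma$ uniformly on $Q_L$ (because the integrand $1/g_\eps-1$ is $\Ocal(\eps^\gamma)$), the solution stays inside $J'$ for all $x_3\in J$ once $\eps$ is small enough — here is where the hypothesis that $J$ is compactly contained in $\operatorname{int}J'$ is used. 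Differentiating the integral representation $\varphi_\eps(x)=x_3+\int_0^{x_3}\bigl(1/g_\eps(x_1,x_2,\varphi_\eps(x_1,x_2,t))-1\bigr)\,dt$ in $x_1,x_2,x_3$ and using $\|1/g_\eps-1\|_{C^1}=\Ocal(\eps^\gamma)$ together with the already-established $C^0$ bound on $\varphi_\eps-\Pi_3$ yields $\|\partial_i\varphi_\eps-\delta_{i3}\|_{C^0}=\Ocal(\eps^\gamma)$, i.e.\ \eqref{est_varphi}. This also shows $\partial_3\varphi_\eps>0$, so $\Phi_\eps$ maps $Q_L$ into $Q_L'$ and $u_\eps=v_\eps\circ\Phi_\eps\in C^1(Q_L;\R^3)$. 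Finally $\det\nabla^\eps u_\eps\equiv 1$ holds by construction, giving \eqref{det=1}.

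\textbf{Main obstacle.} The genuinely delicate point is not the existence of the ODE solution but propagating the $\Ocal(\eps^\gamma)$ smallness from \eqref{estimates_det} all the way to the $C^1$-estimate \eqref{est_varphi} — in particular controlling $\partial_1\varphi_\eps$ and $\partial_2\varphi_\eps$, which involves differentiating the flow with respect to parameters and then a Grönwall argument in which the $C^1$-norm of $1/g_\eps-1$ enters both as a forcing term and through the linearized coefficient. One must be careful that the constants in these Grönwall estimates depend only on $L$, $J$, $J'$ and the uniform (in $\eps$) lower bound on $g_\eps$, not on $\eps$ itself; that $\eps$-uniformity is exactly what the hypothesis \eqref{estimates_det} — a bound in $C^1(Q_L')$ rather than merely $C^0$ — is there to provide. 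A secondary, purely bookkeeping obstacle is verifying the chain-rule determinant identity $\det\nabla^\eps u_\eps = (\det\nabla^\eps v_\eps\circ\Phi_\eps)\,\partial_3\varphi_\eps$ with the rescaled gradients, which requires writing out $\nabla^\eps(v_\eps\circ\Phi_\eps)$ and observing that the $\tfrac1\eps$ factors from the second and third columns cancel correctly against those hidden in $\nabla^\eps v_\eps$ because $\Phi_\eps$ fixes the first two coordinates.
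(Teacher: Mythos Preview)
Your proposal is correct and follows essentially the same route as the paper: reduce $\det\nabla^\eps u_\eps=1$ via the chain-rule identity to the scalar ODE $\partial_3\varphi_\eps=1/(\det\nabla^\eps v_\eps)(x_1,x_2,\varphi_\eps)$ with $\varphi_\eps(x_1,x_2,0)=0$, solve it with $C^1$ parameter dependence, and then propagate the $\Ocal(\eps^\gamma)$ smallness to $\|\varphi_\eps-\Pi_3\|_{C^1}$. The only cosmetic difference is in the last step: the paper differentiates the equivalent implicit integral equation $\int_0^{\varphi_\eps(x)} \det\nabla^\eps v_\eps(x_1,x_2,s)\,ds=x_3$ and solves algebraically for $\partial_1\varphi_\eps,\partial_2\varphi_\eps$, whereas you differentiate the Picard integral and close with Gr\"onwall --- both yield the same $\Ocal(\eps^\gamma)$ bound.
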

	\begin{proof}
	We subdivide the construction of a sequence $(\varphi_\eps)_{\eps}\in C^1(Q_L;J')$ satisfying the conditions~\eqref{est_varphi} and~\eqref{det=1} into two steps. 
	The arguments are strongly inspired by the ideas and techniques of~\cite{CoD06, CoD09}. 
	
		Note that according to~\eqref{estimates_det}, 
	there is a constant $l>0$ such that
	\begin{align}\label{lowerbound}
		\|\det \rn{v}\|_{C^0(Q_L')} \geq l
	\end{align} 
	for all $\eps>0$ sufficiently small.

	\textit{Step~1: Implementation of the determinant constraint.} 
	Recalling the definition of the rescaled gradients in~\eqref{rescaled}, we deduce from the chain rule that $u_\eps=v_\eps\circ \Phi_\eps$ satisfies
		\begin{align*}
			\det \rn{u} &=\det (\nabla v_\epsilon \circ \Phi_\epsilon)\det \rn{\Phi}= \tfrac{1}{\epsilon^2}\det (\nabla v_\epsilon\circ\Phi_\epsilon)\det \nabla\Phi_\epsilon  \\
				&= \det (\rn{v}\circ\Phi_\epsilon)\det \nabla \Phi_\epsilon
				= \det (\rn{v}\circ\Phi_\epsilon) \partial_3\ffi_\epsilon 
		\end{align*}
	 on $Q_L$. 	
	Hence, condition~\eqref{det=1} is fulfilled if $\varphi_\eps$ solves the following initial value problem: 
	For each $x_1\in [0, L]$ and $x_2\in J$, 
	\begin{align}\label{ODE}
		\begin{split}
			\begin{cases}
				\partial_3\ffi_\epsilon(x_1,x_2,x_3) &= \displaystyle\frac{1}{\det \rn{v}(x_1,x_2,\ffi_\epsilon(x_1, x_2, x_3))} \quad \text{for $x_3\in J$, }\\
				\ffi_\epsilon(x_1,x_2,0)&= 0;
			\end{cases}
		\end{split}
	\end{align}
	notice that in view of~\eqref{lowerbound}, the denominator on the right-hand of the differential equation is in particular non-zero; also, the choice of initial conditions is indeed admissible, considering that $0\in J$.

	 The existence of a unique solution $\varphi_\eps\in C^1(Q_L;J')$ to the initial value problem in~\eqref{ODE} with continuously differentiable dependence on the parameters $x_1$ and $x_2$ follows from standard ODE theory.  More precisely,  the argument is based on Banach's fixed point theorem, see e.g.~\cite[III, \S 13, Satz II~and~IV]{Wal00}; 
	 note that in our case, the contraction may be defined on $C^0(Q_L;J')$, since \eqref{estimates_det} implies that for any $\phi\in C^0(Q_L;J')$ and $x\in Q_L$,
	 \begin{align*}
	 	\int_0^{x_3} \frac{1}{\det \rn{v}(x_1,x_2,\phi(x_1,x_2,s))} \dd s \in J',
	 \end{align*}
	 provided $\eps$ is small enough.
		
	\textit{Step 2: Estimates for $\ffi_\epsilon$.} 
	To verify~\eqref{est_varphi} for the previously constructed $\varphi_\eps$, we are going show that 
	\begin{align}\label{estimates_phiaux1}
	  \|\partial_3\ffi_\epsilon - 1\|_{C^0(Q_L)}  = \Ocal(\epsilon^\gamma),
	\end{align}
	\begin{align}\label{estimates_phiaux2}
	 \|\ffi_\epsilon - \Pi_3\|_{C^0(Q_L)} = \Ocal(\epsilon^\gamma),
	\end{align}
	\begin{align}\label{estimates_phiaux3}
		\|\partial_1 \ffi_\epsilon\|_{C^0(Q_L)}= \Ocal(\epsilon^\gamma) \quad \text{and} \quad  \|\partial_2 \ffi_\epsilon\|_{C^0(Q_L)} = \Ocal(\epsilon^\gamma).
	\end{align}
	Indeed,~\eqref{estimates_phiaux1} follows from 	
			\begin{align*}
				|\partial_3\ffi_\eps- 1| &= \frac{|\det(\rn{v}\circ \Phi_\epsilon) - 1|}{|\det (\rn{v}\circ \Phi_\epsilon)|}\leq \frac{1}{l} |\det (\rn{v}\circ \Phi_\epsilon) - 1|\leq \frac{1}{l} \|\det\rn{v} -1\|_{C^0(Q_L)} 
		\end{align*}
		in combination with~\eqref{estimates_det}, and it suffices for~\eqref{estimates_phiaux2} to observe that 
		\begin{align}\label{phi}
			|\ffi_\eps(x) - x_3| \leq \Bigl|\int_0^{x_3}\partial_3\ffi(x_1,x_2,s) - 1\dd s\Bigl| \,\leq |x_3| \|\partial_3\varphi_\eps-1\|_{C^0(Q_L)}
		\end{align}
		for any $x\in Q_L$. 
		
		For the proof of~\eqref{estimates_phiaux3}, it is convenient to rewrite~\eqref{ODE} equivalently in terms of the integral equation 
		\begin{align}\label{integral_equation}
			 \int_0^{\varphi_\eps(x)}\det \rn{v}(x_1,x_2, s)\dd s = x_3 \qquad \text{for $x\in Q_L$.}
		\end{align}
		By the Leibniz integral rule, differentiating~\eqref{integral_equation} with respect to $x_1$ leads to
		\begin{align*}
			 \int_0^{\ffi_\epsilon(x)} \partial_1 [\det \rn{v}(x_1,x_2,s)]\dd s + \partial_1\ffi_\epsilon(x) \det \rn{v}(x_1,x_2,\ffi_\epsilon(x)) =0
		\end{align*}
		for $x\in Q_L$, and hence, along with~\eqref{lowerbound},
		\begin{align*}
			|\partial_1\ffi_\epsilon(x)| &\leq \frac{1}{l} \Bigl| \int_0^{\ffi_\epsilon(x)} \partial_1 \det \rn{v}(x_1,x_2,s) \dd s \Bigr| \leq \frac{|\varphi_\eps(x)|}{l}\|\nabla(\det \rn{v})\|_{C^o(Q_L)}. 
		\end{align*}
		In view of~\eqref{phi} and~\eqref{estimates_det},
		this gives the first part of~\eqref{estimates_phiaux3}. The second part involving $\partial_2\varphi_\eps$ follows in the same way.
	\end{proof}

	The next lemma provides a technical tool that, intuitively speaking, allows us to round off the corners of a piecewise affine curve in such a way that the resulting mollification is a regular curve and still piecewise affine on most of its domain. 
	This can be achieved with the help of B\'ezier curves, see e.g.~\cite{Far02}. 	 
	Although there is a substantial literature on the subject, we have not been able to track down the specific statement needed for the construction of a recovery sequence in Theorem~\ref{theo:strings=0}. We present a self-contained proof in the appendix.
	
	\begin{lemma}[Mollification via B\'ezier curves]\label{prop:bezier_mollifying}
		Let $k\in \N$ and $u\in \Affpw$. Further, let $\Gamma$ be the finite set of points in $(0, L)$ where $u$ is not differentiable, and suppose that $u'\neq 0$ a.e.~in $(0, L)$. 
		
		 Then there exists a sequence $(u_i)_{i}\subset C^k([0,L];\R^3)$ with the following three properties: 
		\begin{itemize}
		\item[$(i)$] (Uniform bounds) $c\leq |u_i'|\leq C$ in $[0,L]$ for all $i\in \N$ with constants $c,C>0$ depending only on $u$;
		\item[$(ii)$] (Affinity) $u_i = u \text{ on } [0,L]\setminus \Gamma_i$ 
		with $\Gamma_i = \{t\in [0,L] : \dist(t,\Gamma) \leq \tfrac{1}{i}\}$ for $i\in \N$ large enough;
		\item[$(iii)$] (Convergence) $u_i \to u$ in $H^{1}(0,L;\R^3)$ as $i\to \infty$.
		\end{itemize}
	\end{lemma}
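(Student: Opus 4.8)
The plan is to build each $u_i$ by keeping $u$ untouched away from the singular set $\Gamma$ and replacing $u$ on a small neighborhood of each corner by a single B\'ezier curve of sufficiently high degree that interpolates the linear pieces with the right regularity. Fix a corner $t_0\in\Gamma$ and let $p^-$ and $p^+$ denote the two nonzero one-sided velocities of $u$ at $t_0$. On the interval $[t_0-\tfrac1i, t_0+\tfrac1i]$ I would place a polynomial curve $b_i$, rescaled from a fixed model B\'ezier curve on $[0,1]$, whose control points are chosen so that at the left endpoint $b_i$ agrees with the incoming affine piece to order $k$ (value $u(t_0-\tfrac1i)$, velocity $p^-$, and vanishing derivatives of order $2,\dots,k$) and similarly at the right endpoint (value $u(t_0+\tfrac1i)$, velocity $p^+$, higher derivatives zero). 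A B\'ezier curve of degree $2k+1$ has enough control points to prescribe the $0$th through $k$th derivatives at both endpoints, so such $b_i$ exists and, by the standard formulas for derivatives of B\'ezier curves at the endpoints in terms of forward differences of control points, depends affinely on the prescribed endpoint data. Setting $u_i:=u$ on $[0,L]\setminus\Gamma_i$ and $u_i:=b_i$ near each corner yields a $C^k$ curve (matching of one-sided derivatives up to order $k$ at the gluing points $t_0\pm\tfrac1i$), which is property $(ii)$ by construction, for $i$ large enough that the intervals $[t_0-\tfrac1i,t_0+\tfrac1i]$ are pairwise disjoint and contained in $(0,L)$.

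Next I would verify the uniform derivative bound $(i)$. The upper bound $|u_i'|\le C$ is easy: on the affine part $|u_i'|=|u'|$ is bounded by $\max(|p^-|,|p^+|)$ over the finitely many pieces, and on each corrector interval the rescaling from the model curve on $[0,1]$ introduces no blow-up because the endpoint data $u(t_0\mp\tfrac1i)$ stay in a fixed compact set, the velocities $p^\pm$ are fixed, and crucially the corrector interval has length $\tfrac2i\to0$ while the endpoints of $u$ on it are at distance $O(\tfrac1i)$ — so after affine reparametrization to $[0,1]$ the control points converge to a fixed configuration (the two limiting velocities and a common base point), hence the B\'ezier polynomial and its derivative converge uniformly on $[0,1]$, giving a uniform bound on $b_i'$. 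The lower bound $|u_i'|\ge c>0$ is the delicate point (see below). Finally, property $(iii)$: since $u_i=u$ outside $\Gamma_i$ and $|\Gamma_i|=O(\tfrac1i)\to0$, while $\|u_i-u\|_{C^0}\to0$ and $\|u_i'\|_{C^0}$ is uniformly bounded, we get $u_i\to u$ uniformly and $u_i'\to u'$ in $L^2$ by dominated convergence (the integrands differ only on $\Gamma_i$ and are uniformly bounded there), which is exactly $H^1$ convergence.

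The main obstacle is securing the uniform lower bound $|u_i'|\ge c>0$ on the corrector intervals. In the limit $i\to\infty$ the rescaled control points of $b_i$ approach a configuration determined by a single base point $u(t_0)$ together with the incoming and outgoing velocities $p^-,p^+$, and the limiting model curve is a fixed polynomial whose derivative is $p^-$ at $0$, $p^+$ at $1$, and interpolates in between; if $p^-$ and $p^+$ are not anti-parallel this limiting derivative can be arranged (by the classical variation-diminishing property of B\'ezier curves, or simply by choosing the model curve so that its hodograph's control polygon avoids the origin) to be bounded away from $0$, and then for $i$ large $|b_i'|$ inherits a uniform positive lower bound by continuity. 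The genuinely problematic case is $p^+=-\lambda p^-$ for some $\lambda>0$, i.e.\ a cusp where the affine curve reverses direction; there no $C^1$ regular reparametrization of a fixed geometric image can avoid $|u_i'|$ passing through $0$. However, this is excluded precisely by the hypothesis $u'\ne0$ a.e.\ combined with the fact that $u\in A_{\pw}(0,L;\R^3)$ — one can choose the B\'ezier corrector to detour slightly off the straight segments near the corner (adding a small transversal bump of size $O(\tfrac1i)$ to the intermediate control points), which does not affect properties $(ii)$ or $(iii)$ and keeps $u_i$ regular; I would carry out this perturbation explicitly in the appendix, where the self-contained proof is promised. All remaining estimates — the forward-difference formulas for endpoint derivatives, the affine dependence of control points on data, and the convergence bookkeeping — are routine and can be compressed.
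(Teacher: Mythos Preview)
Your overall plan matches the paper's, and your arguments for $(ii)$, $(iii)$, and the upper bound in $(i)$ are fine. Two points where you diverge.

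On the lower bound in the non-reversal case, the paper's construction is cleaner: rather than prescribing Hermite data, it takes the $2k+1$ control points to be \emph{points on the curve $u$ itself}, evenly spaced around the corner (so the first $k+1$ lie on the incoming affine segment and the last $k+1$ on the outgoing one). Consecutive control-point differences are then all equal to either $\tfrac{\eta}{k}\xi^{(1)}$ or $\tfrac{\eta}{k}\xi^{(2)}$; hence the hodograph's Bernstein coefficients lie on the segment $[\xi^{(1)},\xi^{(2)}]$, and one gets $u_\eta'(t)\in[\xi^{(1)},\xi^{(2)}]$ for every $t$ directly, with no limiting argument and an explicit constant $c=\dist(0,[\xi^{(1)},\xi^{(2)}])$. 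Your Hermite route is not wrong in principle, but note that with Hermite data the control polygon is \emph{determined}, so you cannot simply ``choose the model curve so that its hodograph's control polygon avoids the origin'' --- you would have to verify that it does.

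The genuine gap is the anti-parallel case. Your claim that $p^+=-\lambda p^-$ ``is excluded precisely by the hypothesis $u'\ne0$ a.e.'' is false: $u(t)=|t-t_0|\,\xi$ is piecewise affine with $u'\ne0$ a.e.\ and has exactly this reversal. You then propose a transversal bump of size $O(1/i)$ in the intermediate control points, which is the right instinct but is not substantiated --- you would need to exhibit such a bump compatible with the $k$ endpoint derivative conditions and show the resulting lower bound on $|u_i'|$ is uniform in $i$. The paper resolves this differently and more systematically: it first perturbs $u$ to a nearby $u_\delta\in A_{\rm pw}(0,L;\R^3)$ by inserting a \emph{loop} at each reversal (replacing the corner value $u(t^{(n)})$ by $u(t^{(n)})+\delta\xi_\perp^{(n)}$ with $\xi_\perp^{(n)}\perp\xi^{(n)}$), thereby splitting the reversal into two corners whose adjacent slopes are no longer anti-parallel; then applies the non-reversal B\'ezier construction to each $u_\delta$; and finally diagonalizes in $\delta$ and the mollification parameter. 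The slopes of $u_\delta$ lie in a fixed finite set depending only on $u$ (not on $\delta$), so the constants in $(i)$ survive the diagonalization.
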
 	

\section{The regime $\alpha = 0$}\label{sec:alpha=0} 
	The first main result derives an effective one-dimensional model for incompressible elastic strings via a $\Gamma$-convergence analysis of the energies $\Ical_\eps^\alpha$ with $\alpha=0$ in the limit of vanishing $\eps$. 
		Its two main characteristics reflect an optimization over all deformations of the cross section at finite thickness and a relaxation procedure minimizing the energy over possible microstructures.
	\begin{theorem}\label{theo:strings=0}
		For $\eps>0$, let $\Ical_\eps^0$ as in~\eqref{energy_all_scaling_regimes}, assuming that $W_0$ satisfies (H1) and (H2).
		Then, $(\Ical_\epsilon^0)_{\epsilon}$ $\Gamma$-converges regarding the weak topology in $H^1(\Omega;\R^3)$ to
		\begin{align*}
			\Ical^0:  H^1(\Omega;\R^3)&\to [0, \infty],\quad 
				u\mapsto \begin{cases}\displaystyle  |\omega| \int_0^L \Wmin^{\rm c}(u') \dd x_1 &\text{ if } u\in H^1(0,L;\R^3),\\ \infty &\text{ otherwise,}\end{cases}
		\end{align*}
		with $\Wmin:\R^{3}\to [0,\infty)$ as defined in~\eqref{overlineW}. 
					
	Furthermore, every sequence $(u_\eps)_\eps\subset H^1(\Omega;\R^3)$ with $\int_\Omega u_\eps \dd{x}=0$ and $\sup_{\eps>0}\Ical_\eps^0(u_\eps) <\infty$ is relatively weakly compact.
	\end{theorem}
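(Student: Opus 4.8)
The plan is to prove the $\Gamma$-convergence statement in the standard two parts (liminf-inequality and existence of a recovery sequence), together with the compactness claim, deducing the easy parts from the known unconstrained results and concentrating the effort on the recovery sequence.

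\textbf{Compactness and liminf-inequality.} First I would treat compactness: since $W \geq W_0$ pointwise and $W_0$ satisfies the coercivity bound in (H2), any sequence with $\sup_\eps \Ical_\eps^0(u_\eps) < \infty$ satisfies $\sup_\eps \int_\Omega c_2|\nabla^\eps u_\eps|^2 - C_2 \dd x < \infty$, hence $\sup_\eps \|\nabla u_\eps\|_{L^2(\Omega;\R^{3\times 3})} < \infty$ (the rescaled gradient controls the full gradient up to factors of $\eps \leq 1$). Combined with the normalization $\int_\Omega u_\eps \dd x = 0$ and Poincaré's inequality, this gives a uniform $H^1$-bound, so a subsequence converges weakly in $H^1(\Omega;\R^3)$. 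Moreover, since $\tfrac1\eps \partial_2 u_\eps$ and $\tfrac1\eps \partial_3 u_\eps$ are bounded in $L^2$, the limit $u$ satisfies $\partial_2 u = \partial_3 u = 0$, i.e. $u \in H^1(0,L;\R^3)$, which is the finite-energy class. For the liminf-inequality: given $u_\eps \weakly u$ in $H^1(\Omega;\R^3)$, if $u \notin H^1(0,L;\R^3)$ the right-hand side is $\infty$ and there is nothing to prove; otherwise, because $W \geq W_0$ we have $\Ical_\eps^0(u_\eps) \geq \int_\Omega W_0(\nabla^\eps u_\eps) \dd x$, and the right-hand side is exactly the unconstrained string functional of~\cite{ABP91}, whose $\Gamma$-liminf inequality yields $\liminf_\eps \Ical_\eps^0(u_\eps) \geq |\omega|\int_0^L \overline{W_0}^{\rm c}(u') \dd x_1$; it then remains to identify the relaxed unconstrained density $\overline{W_0}^{\rm c}$ with $\overline W^{\rm c}$, which is immediate from the definition~\eqref{overlineW} of $\overline W$ (minimizing $W_0$ over cross-section matrices with determinant one gives exactly $\overline W$, and convexification commutes appropriately).

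\textbf{Recovery sequence.} This is the main obstacle and the heart of the proof. Fix $u \in H^1(0,L;\R^3)$; by the usual density and diagonal arguments it suffices to construct recovery sequences for $u \in A_{\rm pw}(0,L;\R^3)$ with $u' \neq 0$ a.e., and for such $u$ we may further, using the relaxation, reduce to piecewise affine $u$ whose (piecewise constant) derivative values $\xi$ we can perturb so that $\overline W^{\rm c}(\xi)$ is attained by a convex combination of values of $\overline W$ at nearby points. The strategy is: (1) apply Lemma~\ref{prop:bezier_mollifying} to round the corners of the piecewise affine curve, obtaining smooth curves $u_i \in C^2([0,L];\R^3)$ with uniform two-sided bounds $c \leq |u_i'| \leq C$, affine off an $\tfrac1i$-neighborhood of the corner set, and converging to $u$ in $H^1$; (2) on each affine piece, following the construction of~\cite{ABP91}, fatten the curve to a thin tube by adding cross-section corrections — i.e. set $v_\eps(x) = u_i(x_1) + \eps x_2\, b_2(x_1) + \eps x_3\, b_3(x_1)$ for a suitably chosen moving frame $(u_i', b_2, b_3)$ along the curve — chosen so that the rescaled gradient $\nabla^\eps v_\eps = (u_i' + O(\eps) \mid b_2 + O(\eps) \mid b_3 + O(\eps))$ realizes, in the $L^2$-limit, the optimal cross-section matrix $A$ attaining $\overline W(\xi)$ on each piece (with an additional Vitali-cover/oscillation construction to pass from $\overline W$ to its convexification $\overline W^{\rm c}$, inserting fine laminates in the $x_1$ variable); (3) crucially, choose the moving frame so that $\det \nabla^\eps v_\eps = 1 + O(\eps^\gamma)$ in $C^1$, i.e. so that the fattened tube is \emph{almost} volume preserving with a controlled error — this is where the "tailored moving frames" are needed and is the technically delicate point, since one must arrange the normal vectors $b_2, b_3$ and their derivatives so that the determinant of $(u_i' \mid b_2 \mid b_3)$ is identically (or nearly) $1$ while the $b_j$ still encode the energetically optimal $A$; (4) apply Lemma~\ref{lem:reparam_det=1} with this $\gamma$ to the regular sequence $v_\eps$, obtaining inner perturbations $\Phi_\eps$ with $\|\Phi_\eps - \id\|_{C^1} = O(\eps^\gamma)$ such that $u_\eps := v_\eps \circ \Phi_\eps$ satisfies $\det \nabla^\eps u_\eps \equiv 1$ exactly, hence lands in the admissible class where $W = W_0$; (5) estimate $\Ical_\eps^0(u_\eps) = \int_\Omega W_0(\nabla^\eps u_\eps) \dd x$ and show, using the continuity of $W_0$, the uniform bounds from (i), and $\|\Phi_\eps - \id\|_{C^1} = O(\eps^\gamma)$, that $\nabla^\eps u_\eps$ differs from $\nabla^\eps v_\eps$ by $O(\eps^\gamma)$ in $L^\infty$, so that $\limsup_\eps \Ical_\eps^0(u_\eps) \leq |\omega|\int_0^L \overline W(\xi) \dd x_1$ along each reduction, and finally assemble via a diagonal argument over $i \to \infty$ and the relaxation to obtain $\limsup_\eps \Ical_\eps^0(u_\eps) \leq |\omega|\int_0^L \overline W^{\rm c}(u') \dd x_1 = \Ical^0(u)$.

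\textbf{Where the difficulty lies.} The routine parts are compactness, the liminf inequality (both outsourced to~\cite{ABP91}), and the final continuity/diagonal bookkeeping. The genuine work is steps (2)--(3): simultaneously (a) producing a \emph{smooth} approximating sequence with uniformly bounded, non-vanishing derivatives so that Lemma~\ref{lem:reparam_det=1} and Lemma~\ref{prop:bezier_mollifying} are applicable, (b) choosing moving frames so the fattened tube is almost volume preserving with a quantified $C^1$-rate $O(\eps^\gamma)$, and (c) doing all this compatibly with the lamination construction needed to reach the convexified density $\overline W^{\rm c}$ rather than just $\overline W$. Once the approximate-incompressibility estimate~\eqref{estimates_det} is secured, Lemma~\ref{lem:reparam_det=1} converts the approximate constraint into the exact one at the cost of an energy error that vanishes, and the proof closes.
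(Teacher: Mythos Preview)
Your overall architecture for the recovery sequence matches the paper's: piecewise affine approximation, relaxation to pass from $\overline W^{\rm c}$ to $\overline W$, B\'ezier mollification (Lemma~\ref{prop:bezier_mollifying}), tailored moving frame so that $\det(u_j'|b_2|b_3)=1$, fattening, and finally the inner perturbation Lemma~\ref{lem:reparam_det=1} to enforce exact incompressibility. The paper does the relaxation \emph{before} mollifying (so the smooth curve already carries the oscillating slopes), and its Step~4 makes the frame satisfy $\det(u_j'|F_{j,\delta,\eta})\equiv 1$ exactly, with additional parameters $\delta,\eta$ handling the transition layers; but these are implementation details of the plan you outlined.

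There is, however, a genuine gap in your liminf argument. Passing from $W$ to $W_0$ and invoking the unconstrained result from~\cite{ABP91} gives you only
\[
\liminf_{\eps\to 0}\Ical_\eps^0(u_\eps)\ \geq\ |\omega|\int_0^L \overline{W_0}^{\,\rm c}(u')\,\dd x_1,
\]
where $\overline{W_0}(\xi)=\min_{A\in\R^{3\times 2}} W_0((\xi|A))$ is the \emph{unconstrained} cross-section minimum. This is not $\overline W$: the latter minimizes only over $A$ with $\det(\xi|A)=1$, so in general $\overline{W_0}\leq\overline W$ with strict inequality (e.g.\ $W_0(F)=|F|^2$ gives $\overline{W_0}(\xi)=|\xi|^2$ but $\overline W(\xi)=|\xi|^2+2|\xi|^{-1}$), and the same for the convex envelopes. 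Your claimed identification ``$\overline{W_0}^{\rm c}=\overline W^{\rm c}$'' is therefore false, and the bound you obtain is too weak. The fix is simple and is what the paper does: bypass~\cite{ABP91} entirely for the liminf and argue directly that
\[
W(\nabla^\eps u_\eps)\ \geq\ \overline W(\partial_1 u_\eps)\ \geq\ \overline W^{\rm c}(\partial_1 u_\eps),
\]
the first inequality being the very definition~\eqref{overlineW} (the constraint $\det=1$ is built into $W$, not discarded), and then use weak lower semicontinuity of the convex integral functional $v\mapsto\int_\Omega\overline W^{\rm c}(v)\,\dd x$ on $L^2$.
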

	\begin{proof} 
		The overall idea of the proof follows along the lines of~\cite{CoD06}, but the arguments need to be suitably modified for this setting of $3$d-$1$d reduction. 
		This involves a taylored mollification and frame construction for piecewise affine regular curves, as well as elements from~\cite{ABP91}, see also \cite{Sca06}. 
		The key ingredient for realizing the volumetric constraint in the construction of the recovery sequence is Lemma~\ref{lem:reparam_det=1}. 
	
		\textit{Part I: Lower bound and compactness.} Let $(u_\epsilon)_{\eps}\subset H^1(\Omega;\R^3)$ be a sequence of functions with zero mean value and uniformly bounded energy regarding $(\Ical_\eps^0)_{\eps}$.
		Due to $W\geq W_0$ and the fact that, by assumption, $W_0$ satisfies the necessary properties to apply the compactness of the corresponding compressible problem (see e.g.~\cite[Theorem~2.1]{ABP91}), we conclude the existence of a subsequence of $(u_\eps)_\eps$ converging weakly in $H^1(\Omega;\R^3)$ to some $u\in H^1(0,L;\R^3)$.
		Since $\Wmin^{\rm c}$ is convex, continuous and bounded from below, the functional
		\begin{align*}
			L^2(\Omega;\R^3)\ni v \mapsto \int_\Omega \Wmin^{\rm c}(v) \dd x 
		\end{align*}
		is $L^2$-weakly lower semi-continuous (see e.g.~\cite[Theorem~1.3]{Dac08}), and hence, 
		\begin{align*}
			\liminf_{\epsilon\to 0} \Ical_\epsilon^0(u_\epsilon) & = \lim_{\epsilon\to 0} \int_\Omega W(\rn{u}) \dd x 
					\geq \liminf_{\epsilon\to 0} \int_\Omega \Wmin^{\rm c}(\partial_1 u_\epsilon) \dd x \\ 
				& \geq \int_\Omega \Wmin^{\rm c}(\partial_1 u) \dd x = |\omega| \int_0^L \Wmin^{\rm c}(u') \dd x_1,
		\end{align*}
		which is the desired liminf-inequality. 	
		
		\textit{Part~II: Upper bound. } Let $u\in H^1(0, L;\R^3)$. For easier reading, we subdivide the argument into several steps. 

		\textit{Step 1: Affine approximation.} Considering that $\Affpw$ is dense in $H^1(0,L;\R^3)$, 
		one can find a sequence $(\tilde v_j)_{j}\subset \Affpw$ such that 
		\begin{align}\label{124}
		\tilde v_j\to u\quad\text{ in $ H^1(0,L;\R^3)$.}
		\end{align} 
		Then, in light of the continuity of $\Wmin^{\rm c}$ and its quadratic growth \eqref{convex_energy_quadratic_growth}, we can pass to the limit via the Vitali-Lebesgue convergence theorem to obtain
		\begin{align}\label{125}
			\lim_{j\to \infty} \int_0^L \Wmin^{\rm c}(\tilde v_j') \dd x_1 = \int_0^L \Wmin^{\rm c}(u') \dd x_1.
		\end{align}
		
		\textit{Step 2: Relaxation.} Next, we will construct a sequence $(v_j)_{j}\subset \Affpw$ such that $v_j\weakly u$ in $ H^1(0,L;\R^3)$ and
		\begin{align}\label{126}
			\lim_{j\to \infty} \int_0^L \Wmin(v_j') \dd x_1 \leq \int_0^L \Wmin^{\rm c}(u') \dd x_1;
		\end{align}	
		in particular, this means that $v_j'\neq 0$ a.e.~in $(0,L)$ for $j\in \mathbb N$. 
		
		For $j\in \mathbb N$, let $\tilde v_j$ be the function from Step 1, and denote the finitely many disjoint open subintervals of $(0, L)$ on which $\tilde v_j'$ is constant by $\tilde I\ui{n}_j$ with $n=1,...,\tilde N_j$; notice that 
		\begin{align*}
		\textstyle \bigl|(0, L)\setminus \bigcup_{n=1}^{\tilde N_j} \tilde I_j^{(n)}\bigr|=0.
		\end{align*} 
		The idea is to modify $\tilde v_j$ suitably on each $\tilde I\ui{n}_j$. 
		To be precise, by classical arguments in convex analysis and relaxation theory 
 (see~e.g.~\cite[Theorem~2.35]{Dac08}), one can find functions $\phi\ui{n}_j\in H^1_0(\tilde I\ui{n}_j;\R^3)\cap \Affpw$, such that
		\begin{align*}
			\dashint_{\tilde I\ui{n}_j} \Wmin(\tilde v_j' + (\phi\ui{n}_j)') \dd x_1 &\leq \Wmin^{\rm c}(\tilde v_j') + \frac{1}{j}
			\end{align*}	
		and
			\begin{align}\label{127}
			\int_{\tilde I\ui{n}_j} |\phi\ui{n}_j|^2 \dd x_1 &\leq \frac{1}{j^2} |\tilde I\ui{n}_j|;
		\end{align}
		the add-on~\eqref{127} follows via a simple refinement argument, just replace $\phi_j\ui{n}$ with a piecewise affine function that consists of multiple scaled copies of the latter. 
				
		Define $v_j := \tilde v_j + \sum_{n=1}^{\tilde N_j} \phi\ui{n}_j \mathbbm{1}_{\tilde I\ui{n}_j}$, where  $\mathbbm{1}_I$ for some $I\subset \R$ denotes the associated indicator function with values $0$ and $1$. Then, $\norm{v_j-\tilde v_j}_{L^2(0,L;\R^3)}\leq \frac{\sqrt{L}}{j}$ and
		\begin{align*}
		\int_0^L \Wmin(v_j') \dd x_1 &
		= \sum_{n=1}^{\tilde N_j} \int_{\tilde I\ui{n}_j} \Wmin(\tilde v_j' +(\phi\ui{n}_j)')  \dd x_1\\
		&\leq \sum_{n=1}^{\tilde N_j}(\Wmin^{\rm c}(\tilde v_j') + \tfrac{1}{j})|\tilde I\ui{n}_j| =  \int_0^L \Wmin^{\rm c}(\tilde v_j') \dd x_1 + \frac{L}{j}.
		\end{align*}
	Hence, letting $j\to \infty$ implies~\eqref{126} in view of~\eqref{125}, as well as $v_j\to u$ in $L^2(0,L;\R^3)$ due to~\eqref{124}. 
		The observation that $(v_j)_{j}$ is uniformly bounded in $ H^1(0,L;\R^3)$ as a consequence of the lower bound in~\eqref{bounds_Wbar},
		allows us to conclude the desired weak convergence $v_j\weakly u$ in $H^1(0,L;\R^3)$.
		
		\textit{Step 3: Mollification of the piecewise affine approximations.} 
		With the help of Lemma \ref{prop:bezier_mollifying} applied to each $v_j$ from Step~2 and a diagonalization argument,
		we obtain a sequence of functions $(u_j)_{j}\subset C^3([0,L];\R^3)$ with these properties:
		\begin{itemize}
		\item[$(i)$] for every $j\in \N$ there are $0<l_j\leq L_j$ such that
		$l_j\leq |u_j'|\leq L_j$ in $[0, L]$; without loss of generality, we may assume that $l_j\leq 1$; 
		\item[$(ii)$] for every $j\in \N$ there are finitely many disjoint open intervals $I_j\ui{n}\subset [0, L]$ with $n=1,...,N_j$ such that the restriction of $u_j$ to $I_j\ui{n}$ is affine and coincides with $v_j|_{I_j\ui{n}}$;
		\item[$(iii)$]  $\lim_{j\to\infty} |\Gamma_j| (L_j^2 + l_j^{-2} +1) = 0$, where $\Gamma_j: =[0,L]\setminus \bigcup_{n=1}^{N_j} I_j\ui{n}$ for $j\in \N$;
		\item[$(iv)$]  $u_j-v_j\to 0$ in $H^1(0,L;\R^3)$, and hence by Step~2, 
		\begin{align}\label{convergenceH1}
			u_j\weakly u \quad \text{in $H^1(0,L;\R^3)$.}
		\end{align}
		\end{itemize}
			
		\textit{Step 4: Taylored frame.} For any curve $u_j$ with $j\in \N$ as in the previous step, let $n_j\in C^2([0, L];\R^3)$ be a normal unit vector field  along $u_j$, meaning $u_j'\cdot n_j=0$ and $|n_j|=1$ everywhere in $[0, L]$; we may assume without restriction that $n_j$ is constant whenever $u_j'$ is. 
	Moreover, define
		\begin{align}\label{frame}
			b_j := \frac{u_j' \times n_j}{|u_j'\times n_j|^2} \in C^2([0,L];\R^3);
		\end{align}  
		indeed, the denominator in~\eqref{frame} is non-zero, because $n_j$ is orthogonal to $u_j'$ and $u_j$ a regular curve by Step~3\,$(i)$.
		By definition, the triple $(u_j', n_j, b_j)$ forms an orthogonal moving frame along the trajectory given by $u_j$. Our aim in this step is to modify this moving frame into a version that is well-suited for the construction of an approximating sequence for $u$  along which the energies converge as well, cf.~Step~5. 
		
		To this end, recall that $u_j$ is affine on each $I_j\ui{n}$ with $n=1, \ldots, N_j$ according to Step~3\,$(ii)$, that is, 
		\begin{align}\label{xinj}
		u_j'|_{I\ui{n}_j}=\xi_j\ui{n}\qquad \text{with $\xi_j\ui{n}\in \R^3$.}
		\end{align} Let $A_j\ui{n} \in\R^{3\times 2}$ be such that
		\begin{align}\label{Ajn}
		W\bigl((\xi_j\ui{n} | A_j\ui{n})\bigl) =\min_{A\in \R^{3\times 2}} W\bigl((\xi_j\ui{n}|A)\bigl)= \Wmin(\xi_j\ui{n});
		\end{align}
		in particular, $\det (\xi_j\ui{n}|A_j\ui{n})=1$, cf.~\eqref{overlineW}.

		Moreover, for fixed $\delta, \eta>0$ sufficiently small, we consider compactly contained nested open subintervals $I_{j, \delta, \eta}\ui{n} \subset I_{j,\delta}\ui{n}\subset I_j\ui{n}$ with 
		\begin{align}\label{measure_deltaeta}
		|I_j\ui{n}\setminus I_{j, \delta}\ui{n}| \leq \delta\quad \text{ and } \quad |I_{j, \delta, \eta}\ui{n}\setminus I_{j,\delta}\ui{n}| \leq \eta.
		\end{align}
		
		Based on these definitions, we find $\bar n_{j, \delta} \in C^2([0,L];\R^3)$ with the properties that 
				\begin{align*}
		\bar n_{j, \delta} \restrict{\Gamma_j} = n_j	\quad \text{and} \quad \bar n_{j, \delta}\restrict{I\ui{n}_{j,\delta}} = A_j^{(n)} e_1\text{ for $n=1, \ldots, N_j$,}
		\end{align*}	
		as well as 
		\begin{align}\label{notvanishing}
		|\bar n_{j, \delta}| <R_j \quad \text{and}\quad |u_j'\times \bar{n}_{j,\delta}| > r_j \text{ in $[0,L]$,}
		\end{align}
		where $R_j:= 2\max\{1, \max_{j=1, \ldots, N_j} |A_j\ui{n}e_1|\}$ and 
		\begin{align*}
		r_j := \frac{1}{2}\min\{l_j, \min_{n=1, \ldots, N_j} |\xi_j\ui{n} \times A_j\ui{n}e_1|\}>0.
		\end{align*} 
	
Geometrically speaking, we choose the free curve segments of $\bar n_{j, \delta}$ on $I_j\ui{n}\setminus I_{j, \delta}\ui{n}$ in such a way that $\bar{n}_{j, \delta}$ lies within the ball around the origin of radius $2\max\{1, |A_j\ui{n}e_1|\}$, but in the complement of the cylinder centered in the origin with axis pointing in the direction of $\xi_j\ui{n}$ and circular cross section of radius $\frac{1}{2}\min\{\big|\tfrac{\xi_j\ui{n}}{|\xi_j\ui{n}|}\times A_j\ui{n}e_1\big|,1\}$, 
Such a choice is possible, because the selected two radii guarantee that both the value of $n_j$ on $I_j\ui{n}$ and $A_j^{(n)}e_1$ are contained in the specified path-connected region, see Figure~\ref{fig:cylinder} for illustration. 
	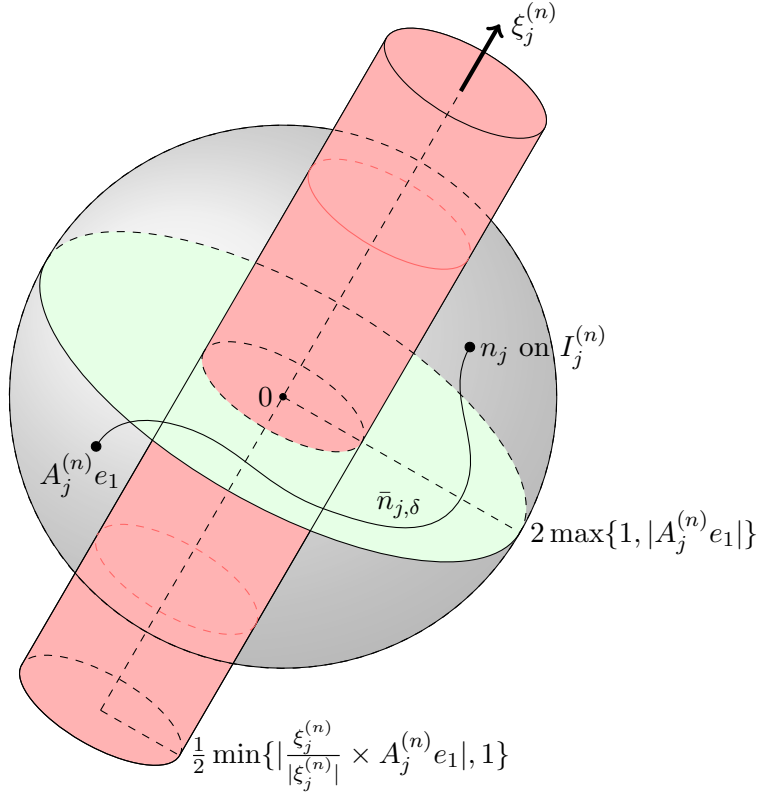
\begin{figure}[h]
		\centering
		\begin{tikzpicture}[scale = 0.6]
			\begin{scope}[rotate=-30]
				\shade[name path = ball, ball color = gray!40, opacity = 0.4] (0,0) circle (6cm); %gray ball
			  	\draw [fill=red!30,name path = cylinder](-2,-8)--(-2,8) arc (-180:-360:2 and 0.8) --++(0,-16) arc (0:180:2 and -0.8); %red cylinder
			  	\fill [name path = ellipse, dashed, green!10] (-6,0) arc (-180:0:6 and -2.4) arc (0:-180:6 and 2.4); %green ellipse
			  	\fill[red!30] (-2,0)--(-2,8) arc (-180:-360:2 and 0.8) --++(0,-8) arc (0:180:2 and -0.8); %draw red cylinder over the ellipse anew
			  	\draw (-2,-8)--(-2,8) arc (-180:-360:2 and 0.8) --++(0,-16) arc (0:180:2 and -0.8); %draw its contours as well
			  	\draw [dashed] (0,0) circle (6cm); %contours of the ball
			 	\fill [fill=black] (0,0) circle (0.5mm); %origin
				\draw[dashed] (0,-8) -- (2,-8) node[anchor=west]{$\frac{1}{2}\min\{|\tfrac{\xi_j^{(n)}}{|\xi_j\ui{n}|}\times A_j^{(n)}e_1 |, 1\}$}; %cylinder radius
				
				\draw[dashed] (2,-8) arc (0:180:2 and 0.8);
				\draw (2,8) arc (-0:-180:2 and 0.8);
				
				\draw [dashed](0,-8)--(0,8); % axis
				\draw [ultra thick, ->] (0,7.8)--(0,9.5) node[anchor = west] {$\xi_j^{(n)}$};%axis vector
				
				\path [name intersections={of=ball and cylinder}];
				\draw (intersection-1) arc (70.54:-250.46:6);%small segment of the ball contours should not be dashed
					
				\draw [red!60]($(intersection-2)+(0,-1)$) arc (180:0:2 and -0.8);
				\draw [red!60, dashed]($(intersection-2)+(0,-1)$) arc (-180:0:2 and -0.8);
				
				\filldraw  (-3,-3) circle  (3pt);
				\draw (-3.35,-3.1) node [anchor = north]  {$A_j\ui{n} e_1$};
				\filldraw  (3,3) circle  (3pt) node[anchor=west] {$n_j$ on $I_j\ui{n}$}; 
				\filldraw  (0,0) circle  (2pt) node[anchor=east] {$0$};
				\draw (-3,-3) to [out=90,in=200] (-2,-1.8) to [out = 20, in= 190] (2,-1.7) to [out=10, in=270] (4.5,0) to [out=90,in=270] (3,3);
				\draw (3.4,-0.8) node {$\bar{n}_{j,\delta}$};

				\draw [red!60,dashed]($(intersection-4)+(0,1)$) arc (0:180:2 and -0.8);
				\draw [red!60, dashed]($(intersection-4)+(0,1)$) arc (0:-180:2 and -0.8);

				%draw contours of the green ellipse
				\draw [dashed] (-6,0) arc (-180:0:6 and -2.4) ;
				\draw (6,0) arc (0:-180:6 and 2.4);
				
				\draw [dashed] (0,0) -- (6,0) node[anchor=west] {$2\max\{1,|A_j^{(n)}e_1|\}$};%ball radius
				
				\draw [dashed](-2,0) arc (180:0:2 and -0.8) arc (0:180:2 and 0.8);
			\end{scope}
		\end{tikzpicture}
		\caption{Sketch of the construction of $\bar{n}_{j, \delta}$ on $I_j\ui{n}\setminus I_{j, \delta}\ui{n}$.}
		\label{fig:cylinder}
	\end{figure}
	
		As a customized replacement for $b_j$ from~\eqref{frame}, we introduce $\bar b_{j, \delta, \eta} \in C^2([0, L];\R^3)$ given by 
		\begin{align*}
			\bar b_{j, \delta, \eta} =\bar b_{j, \delta}+ \sum_{n=1}^{N_j} \psi_{j, \delta, \eta}\ui{n}(A_j\ui{n} e_2-\hat b_j\ui{n})
		\end{align*}
		where $\psi_{j, \delta, \eta}\ui{n}:[0,L]\to [0,1]$ are smooth cut-off functions with compact support in $I_{j, \delta}\ui{n}$ satisfying $\psi_{j, \delta, \eta}\ui{n}= 1$ on $I_{j,\delta,\eta}\ui{n}$, 
		\begin{align*}		
			\bar{b}_{j, \delta} := \frac{u_j'\times \bar{n}_{j, \delta}}{|u_j'\times \bar{n}_{j, \delta}|^2}  \quad \text{and}\quad \hat b_j\ui{n} := \frac{\xi_j\ui{n}\times A_j\ui{n}e_1}{|\xi_j\ui{n}\times A_j\ui{n} e_1|^2};
		\end{align*} 
		we remark that the last two quanities are well-defined due to~\eqref{notvanishing} and~the fact that 
		\begin{align*}
		\det(\xi_j\ui{n}|A_j\ui{n})=(\xi_j\ui{n}\times A_j\ui{n}e_1)\cdot A_j\ui{n}e_2\neq 0.
		\end{align*} 
		
	Next, we collect a few useful properties of the newly constructed moving frames $(u_j', \bar n_{j, \delta}, \bar b_{j, \delta, \eta})$. 		
 	Setting
		\begin{align*}
			F_{j, \delta, \eta}:= (\bar n_{j, \delta} | \bar b_{j, \delta, \eta}) \in C^2([0,L];\R^{3\times 2}), 
		\end{align*}
		we observe that
		\begin{align}\label{det1}
			\det(u_j'|F_{j, \delta, \eta}) &=  \det(u_j'| \bar n_{j, \delta}| \bar b_{j, \delta})+ \sum_{n=1}^{N_j} \psi_{j, \delta, \eta}\ui{n}  \big[ \det(\xi\ui{n}_j| A_j\ui{n}) -\det(\xi_j\ui{n}|A_j\ui{n}e_1|\hat b_j\ui{n}) \big]\\ &= \det(u_j'| \bar n_{j, \delta}| \bar b_{j, \delta})  = 1,\nonumber
		\end{align}
		and
		\begin{align}\label{estF}
		|F_{j, \delta, \eta}|^2 =  |\bar n_{j, \delta}|^2 + |\bar b_{j, \delta, \eta}|^2 
		   \leq C (L_j^2 + l_j^{-2} +1)
		\end{align}
		with a constant $C>0$ independent of $j, \delta$ and $\eta$; see again Step~3, where the constants $L_j$ and $l_j$ have been introduced. 
		Indeed, to see~\eqref{estF}, we infer from~\eqref{notvanishing} together with the estimate 
		\begin{align}\label{njdelta}
		|\xi_j\ui{n}\times A_j\ui{n}e_1|\,|A_j\ui{n}e_2| \geq |\det(\xi_j|A_j\ui{n}e_1|A_j\ui{n}e_2)|= 1
		\end{align} 
		that
		\begin{align}\label{bjdelta}
		|\bar n_{j, \delta}| & \leq R_j \leq 2(1+  \max_{n=1, \ldots, N_j} |A_j\ui{n}e_1| ),
		\end{align}
		and 
\begin{align*}
		|\bar b_{j, \delta, \eta}|&\leq |u_j'\times \bar n_{j, \delta}|^{-1} + \max_{n=1, \ldots, N_j} (|A_j\ui{n}e_2| + |\xi_j\ui{n}\times A_j\ui{n}e_1|^{-1})
		\leq r_j^{-1}  +2  \max_{n=1, \ldots, N_j} |A_j\ui{n}e_2| \\ & \leq 2(l_j^{-1}  +2  \max_{n=1, \ldots, N_j} |A_j\ui{n}e_2|);
		\end{align*}	
		 in the last inequality, we have used in particular that 
		 \begin{align*}
		 r_j\geq \frac{1}{2}\min\{l_j, \min_{n=1,\ldots,N_j}|A_j\ui{n}e_2|^{-1}\}=\frac{1}{2}\min\{l_j, (\max_{n=1,\ldots,N_j}|A_j\ui{n}e_2|)^{-1}\}
		 \end{align*} 
		 due to~\eqref{njdelta}.
		
		Due to~\eqref{Ajn} and the growth properties of $\overline W$ and~$W_0$ from~\eqref{bounds_Wbar} and (H2), there exists a constant $C>0$ such that
		\begin{align}\label{Anj2} |A_j\ui{n}|^2 \leq C(|\xi_j\ui{n}|^2 + |\xi_j\ui{n}|^{-1}+ 1)
		\end{align} for all $n=1, \ldots, N_j$ and $j\in \N$.
		Hence, in view of~\eqref{xinj} and~Step~3\,$(i)$, combining~\eqref{Anj2} with~\eqref{njdelta} and~\eqref{bjdelta} 
		eventually implies~\eqref{estF}.  
		
		\textit{Step 5: Recovery sequence.}
		 Let $J\subset J'\subset \R$ as in Lemma~\ref{lem:reparam_det=1} and $\overline{\omega}\subset J\times J$. We start by considering for fixed $j\in \N$ and $\delta, \eta>0$ the auxiliary sequence $(v_{j, \delta, \eta, \eps})_{\epsilon}\subset C^2(Q_L';\R^3)$ given by 
		\begin{align}\label{recovery_alpha0}
			v_{j,\delta, \eta, \eps}(x) := u_j(x_1) + \epsilon x_2 \bar n_{j, \delta}(x_1) + \epsilon x_3 \bar b_{j, \delta, \eta}(x_1) \quad \text{for $x\in Q_L'$,}
		\end{align}
		 cf.~e.g.~\cite[Proposition 3.3]{ABP91}. 
		 Clearly, 
		\begin{align}\label{C1}
			\|v_{j, \delta, \eta, \eps} -u_{j}\|_{C^1(\overline \Omega;\R^3)}\to 0\quad \text{ as $\eps\to 0$. }
		\end{align}
		Using~\eqref{det1} and the uniform boundedness of $\bar n_{j, \delta}$, $\bar b_{j, \delta, \eta}$ and their derivatives uniformly in $[0,L]$, we obtain for the following terms involving the rescaled gradients of $v_{j, \delta, \eta, \eps}$ that 
		\begin{align}\label{det2}
			\|\det \nabla^\epsilon v_{j, \delta, \eta, \eps}-1\|_{C^1(\overline \Omega)} = \Ocal(\epsilon),
		\end{align} 
		and
		\begin{align}\label{W0convergence}
		\|W_0(\nabla^\eps v_{ j, \delta, \eta, \eps}) - W_{0}\bigl((u_j'|F_{j, \delta, \eta})\bigr)\|_{C^0(\overline \Omega)} \to 0\quad \text{as $\eps\to 0$.}
		\end{align} 
			
		As a consequence of the choices in Step~4, we find that $(u_j'|F_{j, \delta, \eta})= (\xi_j\ui{n}|A_j\ui{n})$ on $I_{j, \delta,\eta}\ui{n}$ for all $n=1, \ldots, N_j$ and $\eps>0$. Hence, along with
		~\eqref{overlineW}, \eqref{det1},~\eqref{Ajn} and (H2),		
		\begin{align}\label{est129} 
	  \int_{\Omega} W_{0}\bigl((u_j'|F_{j, \delta, \eta})\bigr) \dd{x} 
	& \leq  |\omega| \sum_{n=1}^{N_j} \overline{W}(\xi_j\ui{n}) |I_{j, \delta, \eta}\ui{n}| 
	 +  C_2 |\omega|\,\int_{[0, L]\setminus \bigcup_{n=1}^{N_j} I_{j,\delta, \eta}\ui{n}} |u_j'|^2 + |F_{j, \delta, \eta}|^2+1 \dd{x_1}
	  \nonumber \\
			& \leq |\omega|\int_0^L \overline{W}(v_j')\dd{x_1} + C |\omega| (|\Gamma_j|+N_j\delta\eta) (L_j^2 + l_j^{-1} +1)
		\end{align}
		with $C>0$ independent of $ j, \delta, \eta$ and $\eps$; \color{black}
		in the last estimate, we have exploited~\eqref{xinj} in combination with Step~3\,$(ii)$, as well as~\eqref{estF} and~\eqref{measure_deltaeta}. 
	
		What prevents a suitably diagonalized version of $(v_{j, \delta, \eta, \eps})_{\eps}$ from being a valid recovery sequence is its failure to satisfy the incompressibility constraint. This issue can be overcome by modifying the sequence according to Lemma~\ref{lem:reparam_det=1}, which is applicable due to~\eqref{det2}. Precisely, 
		one obtains
		$(u_{j, \delta, \eta, \eps})_{\epsilon}\subset C^1(\overline{\Omega};\R^3)$ such that 
		$\det \nabla^\epsilon u_{j, \delta, \eta, \eps} = 1$ for every $\epsilon>0$ and 
		\begin{align}\label{C2}
			\|u_{j, \delta, \eta, \eps} -  v_{j, \delta, \eta, \eps}\|_{C^1(\overline \Omega;\R^3)} = \Ocal(\eps^2);
		\end{align}
		the latter follows from~\eqref{est_varphi} in combination with the special structure of $v_{j, \delta, \eta,\eps}$ in~\eqref{recovery_alpha0}.  
	
		Hence, $u_{j, \delta, \eta, \eps}\to u_j$ uniformly on $\overline \Omega$ and
		\begin{align}\label{comparison}
			\Ical_\eps^0(u_{j, \delta, \eta, \eps}) - \int_{\Omega} W_0(\nabla^\eps v_{j, \delta, \eta, \eps}) \dd{x} = \int_{\Omega} W_0(\nabla^\eps u_{j, \delta, \eta, \eps}) \dd{x} - \int_{\Omega} W_0(\nabla^\eps v_{j, \delta, \eta, \eps}) \dd{x} \to  0
		\end{align}
		as $\eps\to 0$.
		
   		Joining~\eqref{W0convergence} and~\eqref{est129} with~\eqref{comparison}, under consideration of~\eqref{126},~\eqref{measure_deltaeta} and Step~3\,$(iii)$, gives
		\begin{align*}
			\limsup_{j\to\infty}\limsup_{\delta\to 0}\limsup_{\eta\to 0}\limsup_{\epsilon\to 0} \Ical_\epsilon^0(u_{j, \delta, \eta, \eps})\leq |\omega|\int_0^L\overline{W}^{\rm c}(u')\dd{x_1}= \Ical^0(u).
		\end{align*}
 		 Together with~\eqref{convergenceH1},~\eqref{C1} and~\eqref{C2}, we can finally extract a diagonal sequence $(u_\eps)_{\eps}$ in the sense of Attouch~\cite[Lemma~1.15, Corollary~1.16]{Att84} such that 
		\begin{align*}
			\limsup_{\eps\to 0} \Ical_\epsilon^0(u_\eps)\leq \Ical^0(u)
		\end{align*}
		and  $u_\eps\weakly u$ in $H^1(\Omega;\R^3)$,  which concludes the proof.
	\end{proof}

\section{The regime $0<\alpha < 2$}\label{sec:alpha>0}
	In the intermediate scaling regime $\alpha\in(0,2)$, all admissible deformations for the one-dimen\-sional limit model can be realized with zero energy, 
	as our next theorem shows. 
	\begin{theorem}\label{theo:strings>0}
		For $\eps>0$, let $\Ical_\eps^\alpha$ with $0<\alpha<2$ as in~\eqref{energy_all_scaling_regimes} such that $W_0$ satisfies (H1), as well as (H2) if $\alpha<\frac{1}{2}$, and (H3) if $\alpha\geq \frac{1}{2}$.  
		
		Then, $(\Ical_\epsilon^\alpha)_{\epsilon}$ $\Gamma$-converges with respect to the weak topology in $H^1(\Omega;\R^3)$ to
		\begin{align*}
			\Ical^\alpha:  H^1(\Omega;\R^3)&\to [0, \infty],\quad 
				u\mapsto \begin{cases}\displaystyle 0 &\text{ if } u\in H^1(0,L;\R^3) \text{ with } u'\in L_0(\overline{W})^{\rm c} \text{ a.e.~in $(0, L)$},\\ \infty &\text{ otherwise,} \end{cases}
		\end{align*}
		where $L_0(\overline W)$ is the zero level set of $\overline W$ as in~\eqref{overlineW}. 
		
		Moreover, any $(u_\eps)_{\eps}\subset H^1(\Omega;\R^3)$ with $\int_\Omega u_\eps \dd{x}=0$ for all $\eps>0$ and $\sup_{\eps>0} \Ical_\eps^\alpha(u_\eps) <\infty$ is relatively weakly compact. 
	\end{theorem}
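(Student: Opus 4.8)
The plan is to verify the three ingredients of $\Gamma$-convergence (compactness, liminf-inequality, recovery sequences) separately, following the blueprint of Theorem~\ref{theo:strings=0}; the novelty is that the limit functional vanishes on its finiteness domain, which makes compactness and the lower bound essentially free but forces the recovery sequences to be energetically sharp to order $\eps^\alpha$. \emph{Compactness and lower bound.} Since $W\ge W_0$ and $W_0$ satisfies at least (H2), any $(u_\eps)_\eps$ with zero mean and $\sup_\eps\Ical_\eps^\alpha(u_\eps)<\infty$ satisfies $\int_\Omega|\nabla^\eps u_\eps|^2\dd x\le C(\eps^\alpha+1)$, so $(u_\eps)_\eps$ is bounded in $H^1(\Omega;\R^3)$; moreover $\tfrac1\eps\partial_2 u_\eps$ and $\tfrac1\eps\partial_3 u_\eps$ are bounded in $L^2$, hence every weak limit $u$ obeys $\partial_2 u=\partial_3 u=0$, i.e.\ $u\in H^1(0,L;\R^3)$, which gives relative weak compactness. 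For the liminf-inequality, along a subsequence with $u_\eps\weakly u$ and bounded energy, finiteness forces $\det\nabla^\eps u_\eps=1$ a.e., hence $\partial_1 u_\eps\neq 0$ a.e.\ and, by~\eqref{overlineW},
\[
W_0(\nabla^\eps u_\eps)=W(\nabla^\eps u_\eps)\ \ge\ \Wmin(\partial_1 u_\eps)\ \ge\ \Wmin^{\rm c}(\partial_1 u_\eps)\ \ge\ 0.
\]
Thus $\int_\Omega\Wmin^{\rm c}(\partial_1 u_\eps)\dd x\le\eps^\alpha\,\Ical_\eps^\alpha(u_\eps)\to 0$, and $L^2$-weak lower semicontinuity of $v\mapsto\int_\Omega\Wmin^{\rm c}(v)\dd x$ (legitimate since $\Wmin^{\rm c}$ is convex, continuous and bounded below) gives $\int_\Omega\Wmin^{\rm c}(u')\dd x=0$; as $\Wmin^{\rm c}\ge 0$, this means $u'(x_1)\in L_0(\Wmin^{\rm c})=L_0(\Wmin)^{\rm c}$ for a.e.\ $x_1$, so $\Ical^\alpha(u)=0\le\liminf_\eps\Ical_\eps^\alpha(u_\eps)$. (If the liminf is $+\infty$, or if $u\notin H^1(0,L;\R^3)$, there is nothing to prove, the latter forcing the liminf to be $+\infty$ by the compactness argument.)

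\emph{Recovery sequences.} Given $u\in H^1(0,L;\R^3)$ with $u'\in L_0(\Wmin)^{\rm c}$ a.e., I would adapt the Cosserat-type construction of~\cite{Sca06} and then enforce incompressibility through Lemma~\ref{lem:reparam_det=1}. Since $L_0(\Wmin)^{\rm c}$ is the convex hull of $L_0(\Wmin)$, a Carath\'eodory decomposition of $u'$ together with a standard oscillation (Riemann--Lebesgue) argument first produces $\tilde u_j\in\Affpw$ with $\tilde u_j'\in L_0(\Wmin)$ (so $\Wmin(\tilde u_j')=0$) a.e.\ and $\tilde u_j\weakly u$ in $H^1(0,L;\R^3)$. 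Rounding the corners of each $\tilde u_j$ — via Lemma~\ref{prop:bezier_mollifying}, or, in the regime where (H3) holds and $L_0(\Wmin)$ is a sphere by Remark~\ref{rem:zero_level}\,a), by letting the tangent move \emph{inside} $L_0(\Wmin)$ — yields smooth regular curves $\hat u$ that are affine outside shrinking neighbourhoods of finitely many corners. Along $\hat u$, as in Step~4 of the proof of Theorem~\ref{theo:strings=0}, I would pick on each affine piece $I\ui n$ (with $\hat u'|_{I\ui n}=\xi\ui n\in L_0(\Wmin)$) a matrix $A\ui n$ with $(\xi\ui n\,|\,A\ui n)\in L_0(W_0)$ and build a smooth moving frame $(\hat u'\,|\,n\,|\,b)$ equal to $(\xi\ui n\,|\,A\ui n)$ on the bulk of $I\ui n$, interpolating across the transition intervals and satisfying $\det(\hat u'\,|\,n\,|\,b)=1$ throughout. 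Then $v_\eps(x):=\hat u(x_1)+\eps x_2 n(x_1)+\eps x_3 b(x_1)$ has $\|\det\nabla^\eps v_\eps-1\|_{C^1(\overline\Omega)}=\Ocal(\eps^\gamma)$ for a suitable $\gamma>0$, so Lemma~\ref{lem:reparam_det=1} provides $u_\eps:=v_\eps\circ\Phi_\eps$ with $\det\nabla^\eps u_\eps\equiv 1$ and $\|u_\eps-v_\eps\|_{C^1(\overline\Omega;\R^3)}=\Ocal(\eps^\gamma)$. Outside the transition zones $\nabla^\eps u_\eps$ is within $\Ocal(\eps^\gamma)$ of $L_0(W_0)$, so there $W_0(\nabla^\eps u_\eps)$ is of order $\eps^2$ (under (H3)) or merely $\Ocal(1)$ (under (H2)); it then remains to control the contribution of the transition zones, whose total length can be prescribed, and to diagonalize in the sense of Attouch~\cite{Att84} using $\hat u\weakly u$ and $\Ical^\alpha(\hat u)=0$, obtaining $u_\eps\weakly u$ in $H^1(\Omega;\R^3)$ with $\limsup_\eps\Ical_\eps^\alpha(u_\eps)\le 0=\Ical^\alpha(u)$.

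\emph{The main obstacle.} The hard part will be choosing the width $h$ of the transition zones so that, simultaneously, $\|\det\nabla^\eps v_\eps-1\|_{C^1}=\Ocal(\eps^\gamma)$ with $\gamma>0$ (needed to invoke Lemma~\ref{lem:reparam_det=1}) and the elastic energy stored on those zones is $o(\eps^\alpha)$. For $\alpha\ge\tfrac12$ one has (H3), hence $L_0(W_0)=\SO(3)$ and $L_0(\Wmin)$ is a sphere, so the corners can be rounded \emph{within} the zero set and the transition zones carry energy density only of order $\eps^2 h^{-2}$, which is admissible for any fixed $h$ and every $\alpha<2$. For $\alpha<\tfrac12$ only (H2) is available, the transition zones unavoidably carry $\Ocal(1)$ energy density, forcing $h\ll\eps^\alpha$, while the $C^1$-control of the determinant forces $h\gg\eps^{1/2}$; the window $\eps^{1/2}\ll h\ll\eps^\alpha$ is non-empty precisely because $\alpha<\tfrac12$, which is exactly why the hypotheses are split at $\alpha=\tfrac12$. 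This quantitative balance, rather than compactness or the liminf-inequality, is where essentially all the effort goes.
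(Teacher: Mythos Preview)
Your compactness and liminf arguments match the paper's (which simply invokes Theorem~\ref{theo:strings=0}), and your quantitative reading of the recovery problem---the window $\eps^{1/2}\ll h\ll\eps^\alpha$ for $\alpha<\tfrac12$, and order-$\eps^2 h^{-2}$ densities under (H3) making any fixed $h$ work for $\alpha\ge\tfrac12$---is exactly the right bookkeeping. The proposal is correct.

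The paper organises the recovery sequence rather differently, though. Instead of first mollifying the piecewise affine curve $\tilde u_j$ and then building a frame along it (your route, echoing Steps~3--4 of Theorem~\ref{theo:strings=0}), the paper works directly at the level of $3\times 3$ matrices: for each corner it chooses a smooth path $P^{(n)}$ in the path-connected manifold $\Mcal=\SO(3)$ (if $\alpha\ge\tfrac12$) or $\Mcal=\Sl(3)$ (if $\alpha<\tfrac12$) joining $(\xi^{(n)}\,|\,A^{(n)})$ to $(\xi^{(n+1)}\,|\,A^{(n+1)})$, reparametrises it over an interval of width $\eps^\beta$, and sets $v_\eps$ by integrating $P_\eps e_1$ for the curve part and taking $P_\eps e_2,\,P_\eps e_3$ as the cross-section directions. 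This makes your condition ``$\det(\hat u'\,|\,n\,|\,b)=1$ throughout'' (respectively, $(\hat u'\,|\,n\,|\,b)\in\SO(3)$) automatic, yields $\|\det\nabla^\eps v_\eps-1\|_{C^1}=\Ocal(\eps^{1-2\beta})$ by direct computation, and bypasses both Lemma~\ref{prop:bezier_mollifying} and the Step~4 frame machinery entirely. Your approach, by contrast, fixes the tangent in advance (via B\'ezier or a spherical geodesic) and must then produce a frame matching the prescribed $A^{(n)}$ at both endpoints while keeping the determinant equal to one along a moving tangent---feasible, but more laborious than simply picking a path in $\Mcal$, and it introduces extra parameters (mollification scale, frame-transition scales) to diagonalise over. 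The trade-off is that the paper's construction is tailored to this regime and does not reuse the $\alpha=0$ argument, whereas yours stays closer to that machinery.
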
 
	\begin{remark}\label{lem:trivial_ energ_on_ball} 
		a) Trivially, if the zero level set of $\overline W$ is empty, which is the case when $L_0(W_0)\cap \Sl(3)=\emptyset$, 
		the limit functional $\Ical^\alpha$ takes the value $\infty$ everywhere. 
		
		 b) Recall Remark \ref{rem:zero_level} c), which shows that if $W_0$ is frame-indifferent and has a single-well  energy at $\SO(3)$, then,
		\begin{align*}
				\Wmin^{\rm c}(\xi) = 0 \text{ if and only if }|\xi|\leq 1.
		\end{align*} 
		The interpretation of Theorem~\ref{theo:strings>0} in this case is that no energy is required to compress the one-dimensional limit object.
		Stretching, on the other hand, has infinite energetic cost and is therefore forbidden.
		It is interesting to observe that a comparison with~\cite[Theorem~4.5]{Sca06}, where no incompressibility constraint is imposed, yields no difference for the resulting string models.
		
	 c) By a slight adaptation (in fact, a simplification) of the proof below, the statement of Theorem~\ref{theo:strings>0} remains true if $W$ is replaced with a continuous density $W_0$ that satisfies the growth assumption (H2) if $\alpha<1$ and (H3) if $\alpha\geq 1$. This observation allows to weaken the hypotheses on the energy densities in~\cite[Theorem~4.5]{Sca06}, where $3$d-$1$d dimension reduction is performed in the unconstrained case, for $\alpha<1$; in particular, the result becomes applicable for energies of  multi-well type.  
	\end{remark}
	
	\begin{proof}[Proof of Theorem \ref{theo:strings>0}]
		Under consideration of Lemma~\ref{lem:reparam_det=1}, the proof of the upper bound comes down to a  modification and generalization of the construction in~\cite[Theorem 4.5]{Sca06}. 
		The compactness and lower bound follow as an immediate consequence of the respective results for the case $\alpha=0$. 
	
		\textit{Part~I: Lower bound and compactness.} 
		Let $(u_\epsilon)_{\epsilon} \subset H^1(\Omega;\R^3)$ be a sequence of functions with vanishing mean value. If $(u_\eps)_\eps$ has uniformly bounded energy, then there is a constant $C>0$ such that
		\begin{align*}
			\Ical_\epsilon^0 (u_\epsilon) \leq C\epsilon^\alpha
		\end{align*}
		for all $\eps>0$.
		By Theorem~\ref{theo:strings=0}, a subsequence of $(u_\epsilon)_{\eps}$ (not relabeled) converges weakly to some one-dimensional function $u\in H^1(0,L;\R^3)$ in $H^1(\Omega;\R^3)$, and 
		\begin{align*}
			0 = \liminf_{\epsilon\to 0} \Ical_\epsilon^0(u_\epsilon) \geq  |\omega|\int_0^L \Wmin^{\rm c}(u')\dd x_1.
		\end{align*}
		Thus, $u' \in L_0(\overline{W}^{\rm c})=L_0(\overline{W})^{\rm c}$ in $(0, L)$, which concludes the first part of the proof.

		\textit{Part~II: Upper bound.} Let $u\in H^1(0, L;\R^3)$ and assume that $u'\in L_0(\overline{W})^{\rm c}$ a.e.~in $(0, L)$, otherwise there is nothing to prove. We proceed in two steps.

		\textit{Step 1: Basic construction for piecewise affine functions.} 	We address first the special case when $u\in A_{\rm pw}(0,L;\R^3)$ and $u'\in L_0(\overline W)$ a.e.~in $(0, L)$. 
		
		Let
		\begin{align*}
			0 =  t\ui{0} < t\ui{1} <  \ldots < t\ui{N-1} < t\ui{N}=L
		\end{align*} 
		be a partition of the interval such that the restrictions of $u'$ to the intervals $(t\ui{n-1}, t\ui{n})$  
	 	are constant, with values $\xi\ui{n}\in L_0(\overline W)$, 
	 respectively. 
		For any $n=1, \ldots, N$, we select $A\ui{n}\in \R^{3\times 2}$ to be a solution to the minimization problem  in~\eqref{overlineW} defining~$\overline{W}(\xi\ui{n})$, so that
		\begin{align}\label{139}
			\det (\xi\ui{n}|A\ui{n}) =1 \quad \text{and}\quad W\bigl((\xi\ui{n}|A\ui{n})\bigr)= \overline W(\xi\ui{n}) =0. 
		\end{align}

		Next, we set $\Mcal :=\SO(3)$ if $\alpha\geq\frac{1}{2}$ and $\Mcal:=\Sl(3)$ if $\alpha<\frac{1}{2}$, and exploit the fact that $\Mcal$ is a path-connected smooth manifold 
		to obtain 
		\begin{align*}
			P\ui{n}\in C^\infty([0, 1];\Mcal)
		\end{align*} 
		such that $P\ui{n}(0) = (\xi\ui{n}|A\ui{n})$ and $P\ui{n}(1) = (\xi\ui{n+1}| A\ui{n+1})$ for $n=1, \ldots, N-1$. 
		It is convenient to reparametrize $P\ui{n}$ in the following way: 
		
		Fix $0<\beta<\tfrac{1}{2}$ (to be specified later) and let $\psi\in C^\infty([0,1];[0,1])$ be a transition function that vanishes in a neighbourhood of $0$, takes the value $1$ close to $1$ and satisfies $|\psi'| \leq 2$. 
 For $\eps>0$ sufficiently small, we set 
		\begin{align*} 
			P_\epsilon(t) := (P\ui{n}\circ \psi)\bigl(\tfrac{t-t\ui{n}}{\eps^\beta}\bigr) \qquad \text{for $t\in[t\ui{n},t\ui{n}+\eps^\beta]$}
			\end{align*}
			for $n=1, \ldots, N-1$. Regarding the scaling behavior of $P_\eps$ and its derivatives one finds that
		\begin{align}\label{scaling_peps}
			\norm{P_\eps }_{C^0(\Gamma_\eps;\R^{3\times 3})}  = \Ocal(1),\quad  \norm{P_\eps'}_{C^0(\Gamma_\eps;\R^{3\times 3})} = \Ocal(\eps^{-\beta}) \quad \text{and} \quad \norm{P_\eps''}_{C^0(\Gamma_\eps;\R^{3\times 3})} = \Ocal(\eps^{-2\beta}),
		\end{align}
		where $\Gamma_\eps:=\bigcup_{n=1}^{N-1} [t\ui{n}, t\ui{n} + \eps^\beta]$.

		Now, let $J\subset J'\subset \R$ be closed and bounded intervals as in Lemma \ref{lem:reparam_det=1} and $\overline\omega\subset J\times J$.
		With inspiration from~\cite[Theorem~4.5]{Sca06}, we define an auxiliary sequence $(v_\eps)_{\eps}$ of functions on the cuboid 
		$Q_L':=[0,L]\times J'\times J'$; precisely, for $\eps>0$ and $x\in Q_L'$,
		\begin{align*}
			v_\epsilon(x)=\begin{cases} 
			(\xi\ui{1}|A\ui{1}) x_\eps + b_\eps\ui{1} &\text{ if } x_1\in[t\ui{0},t\ui{1}),\\[0.2cm]
				\displaystyle\int_{t\ui{n}}^{x_1} P_\epsilon (t) e_1 \dd t &\text{ if } x_1\in[t\ui{n},t\ui{n}+\eps^\beta) \text{ with $n=1, \ldots, N-1$},
				\\ \qquad \quad +\  P_\epsilon(x_1)(x_\eps -x_1e_1)+ d_\eps\ui{n} & \\[0.2cm]
				(\xi\ui{n}|A\ui{n}) x_\eps + b_\eps\ui{n} &\text{ if } x_1\in[t\ui{n}+\eps^\beta,t\ui{n+1}) \text{ with $n=1, \ldots N-1$};
			\end{cases}		
		\end{align*}	
		here, $x_\eps :=(x_1, \eps x_2, \eps x_3)$, and the translation vectors $b_\epsilon\ui{n}, d_\epsilon\ui{n}\in \R^3$ are chosen in such a way that $v_\epsilon$ is continuous. 
		It is immediate to see that 
		\begin{align}\label{convergence12}
			v_\eps\to u\qquad \text{ uniformly  in $\overline \Omega$ as $\eps\to 0$.}
		\end{align}
		
Let us collect some further useful properties of the functions $v_\eps$. 
		In fact, $v_\eps$ is not only continuous, but by construction even smooth, so in particular, $v_\eps\in C^2(Q_L';\R^3)$,
		 and a calculation of the rescaled gradients gives 
		\begin{align*}
			\rn{v} (x)= \begin{cases} 
				(\xi\ui{1}|A\ui{1})&\text{ if } x_1\in[t\ui{0},t\ui{1})\\
				P_\epsilon (x_1)&\text{ if }x_1\in[t\ui{n},t\ui{n}+\eps^\beta) \text{ with $n=1, \dots, N-1$, }\\ \quad + \ \eps  P_\epsilon'(x_1) (x_2e_2+x_3e_3)\otimes e_1 & \\ 
			 (\xi\ui{n}|A\ui{n})&\text{ if } x_1\in[t\ui{n}+\eps^\beta, t\ui{n+1}) \text{ with $n=1, \dots, N-1$. }
			\end{cases}
		\end{align*}
	 	Since $\beta<\frac{1}{2}$, the sequence $(\rn{v})_{\eps}$ is bounded in $C^0(Q_L';\R^{3\times 3})$. 
		Moreover, the function $v_\eps$ satisfies the incompressibility condition exactly except on sets of small measure, 
		where $\det \rn{v}$ is close to $1$. To quantify this statement, we compute
		\begin{align*}
			\det \rn{v}(x)= 1 + \eps  \det \bigl(P_\eps'(x_1) (x_2e_2 + x_3e_3) | P_\eps(x_1)e_2| P_\eps(x_1)e_3\bigr)
		\end{align*} for $x\in Q_\eps:=\Gamma_\eps\times J'\times J'$, and observe that 
		\begin{align}\label{130}
			\det \rn{v} =1 \quad \text{on $Q_L'\setminus Q_\eps$. }
		\end{align} 
		Thus, it follows in view of~\eqref{scaling_peps} that 
		\begin{align}\label{hypo_lemma}
			\|\det \rn{v} -1\|_{C^0(Q_L')}= \Ocal(\epsilon^{1-\beta})\quad \text{ and } \quad \|\nabla( \det \rn{v})\|_{C^0(Q_L';\R^3) }= \Ocal(\eps^{1-2\beta}). 
			\end{align}

		Now, with~\eqref{hypo_lemma} at hand,  we are in the position to apply Proposition~\ref{lem:reparam_det=1} to the sequence $(v_\eps)_{\eps}$ with $\gamma = 1-2\beta$ to obtain 
		a modified sequence $(u_\eps)_{\eps}\subset C^1(\overline\Omega;\R^3)$ that satisfies $\det \rn{u} = 1$ everywhere in $\Omega$, namely
		\begin{align*}
			u_\epsilon(x) := v_\epsilon(x_1,x_2,\ffi_\eps(x)), \quad x\in \Omega,
		\end{align*}
		with $\ffi_\eps\in C^1(Q_L;J')$ such that \eqref{est_varphi} holds.
		Notice that the inner perturbation defining $u_\eps$ corresponds to the identity map on $Q_L\setminus Q_\eps$,
 		since, due to~\eqref{130}, the ordinary differential equation in \eqref{ODE} reduces to $\partial_3 \varphi_\epsilon = 1$ on this set; thus, along with~\eqref{139},
		\begin{align}\label{136}
			u_\eps=v_\eps \quad \text{and}\quad \rn{u} =\rn{v} \in L_0(W)\subset L_0(W_0)\qquad  \text{on $\Omega\setminus Q_\eps$.}
		\end{align}
		Furthermore, as a consequence of~\eqref{est_varphi},
		\begin{align}\label{137}
			\| u_\eps-v_\eps\|_{C^0(\overline \Omega;\R^{3\times 3})} = \Ocal(\eps^{2-2\beta}) \quad \text{and}\quad \| \rn{u}-\rn{v}\|_{C^0(\overline \Omega;\R^{3\times 3})} = \Ocal(\eps^{1-2\beta}),
		\end{align}
		and therefore, also $(\rn{u})_{\eps}$ is bounded in $C^0(\overline{\Omega};\R^{3\times 3})$. Along with ~\eqref{convergence12}, it follows that
		\begin{align*}
			u_\eps\weakly u \quad \text{in $H^1(\Omega;\R^3)$.}
		\end{align*} 

		We are now in the position to conclude the proof Step~1 by showing that,
		\begin{align}\label{upperbound}
			\lim_{\eps\to 0} \Ical_\eps^\alpha(u_\eps)=0 =\Ical^\alpha(u).
		\end{align} 
		The two cases $\alpha < \frac{1}{2}$ and $\alpha\geq \frac{1}{2}$, call for a different reasoning, which we will detail next.

		\textit{Step 1a: The case $\alpha\geq \tfrac{1}{2}$.} Let $\beta<\frac{1}{2}-\frac{\alpha}{4}$.
		Then, joining (H3),~\eqref{136},~\eqref{scaling_peps} and~\eqref{137} with the observations that $|Q_\eps| =\Ocal(\eps^\beta)$, $\det \rn{u}=1$ 
		and $P_\eps\in \SO(3)$ pointwise, gives rise to the following estimate,
		\begin{align*}
			\Ical_\epsilon^\alpha (u_\epsilon)  & = \frac{1}{\epsilon^\alpha}\int_{\Omega} W_0(\rn{u})\dd x \leq  \frac{C_3}{\epsilon^\alpha}\int_{\Omega} \dist^2(\rn{u}, \SO(3)) \dd x  \\ 
				& \leq \frac{2C_3}{\eps^\alpha}\left(\int_{ Q_\eps} \dist^2(\rn{v}, \SO(3)) \dd x  + \int_{Q_\eps \cap \Omega} |\rn{v}-\rn{u}|^2\dd x\right) \\ 
				& \leq \frac{2C_3}{\epsilon^\alpha}\left( \eps^2|J|^4|Q_\eps|\norm{P_\eps'}_{C^0(\Gamma_\eps;\R^{3\times 3})}^2  + |Q_\eps| \norm{\rn{v}-\rn{u}}_{C^0(\overline \Omega;\R^{3\times 3})}^2\right)  \\
				& = \Ocal(\eps^{2-\alpha-\beta}) + \Ocal(\eps^{2-\alpha - 3\beta}) =  \Ocal(\eps^{2-\alpha - 3\beta}).
		\end{align*}
		The choice of $\beta$ yields~\eqref{upperbound}.
		
		\textit{Step 1b: The case $\alpha<\tfrac{1}{2}$.} Let $\alpha<\beta<\frac{1}{2}$.
		We invoke~\eqref{136} and (H2), as well as ~$\det \rn{u}=1$ in $\Omega$, $|Q_\eps| =\Ocal(\eps^\beta)$, and the uniform boundedness of $\rn{u}$ 
		to infer that
		\begin{align*}
			\Ical_\epsilon^\alpha (u_\epsilon)  & = \frac{1}{\epsilon^\alpha}\int_{\Omega} W_0(\rn{u})\dd x \leq \frac{1}{\epsilon^\alpha}\int_{Q_\eps\cap \Omega} W_0(\rn{u}) \dd x \\ 
				& \leq  \frac{C_2}{\epsilon^\alpha} \int_{Q_\eps\cap \Omega} |\rn{u}|^2 +1\dd x  
				\leq  C_2 |Q_\eps|\epsilon^{-\alpha}\bigl( \norm{\rn{u}}_{C^0(\overline{\Omega};\R^{3\times 3})}^2 + 1\bigr)
				=\Ocal(\eps^{\beta-\alpha}).
		\end{align*}
				
		\textit{Step 2: Relaxation and approximation.} 
		To address the general case, let $u\in H^1(0, L;\R^3)$ such that $u'\in L_0(\overline{W})^{\rm c}$ a.e.~in $(0, L)$.  	 
		By standard tools from convex and asymptotic analysis (cf. e.g.~Caratheodory's theorem and the Riemann-Lebesgue lemma), 
		there is a sequence $(u_j)_{j}\subset A_{\rm pw}(0, L;\R^3)$ such that $u_j'\in L_0(\overline W)$ a.e.~in $(0, L)$ and
		\begin{align*}
			u_j\weakly u\quad \text{in $H^1(0, L;\R^3)$.}
		\end{align*}
		Now, Step~1 applied for each fixed $j\in\N$ provides sequences
		$(u_{j,\epsilon})_{\epsilon}\subset C^1(\overline{\Omega};\R^3)$ with the properties that $u_{j,\epsilon}\weakly u_j$  in $H^1(\Omega;\R^3)$ and $\lim_{\epsilon\to 0}\Ical_\epsilon^\alpha(u_{j,\epsilon}) = 0$. Extracting a diagonal sequence $(u_\eps)_{\eps}$ with the help of a generalized version of Attouch's diagonalization lemma (see e.g.~\cite[proof of Proposition~1.11 (p.~449)]{FeF12}) finally gives the sought after recovery sequence for $u$.
	\end{proof}	

	\section*{Appendix}
	\begin{proof}[Proof of Lemma~\ref{prop:bezier_mollifying}]	
		The idea of the proof is to mollify $u$ using B\'ezier curves with sufficiently many control points to ensure the desired $C^k$-regularity. This way, the mollified curve has derivatives that lie in the convex hull of two neighbouring slopes of $u$. However, if the latter happen to be anti-parallel, then, by design, the derivative of the mollified curve vanishes at some point. To circumvent this issue, we perturb $u$ via
		a suitable loop construction. 
		
		Since $u\in A_{\rm pw}(0,L;\R^3)$ is piecewise affine and $u'\neq 0$ almost everywhere, there is a partition 
		$0=: t\ui{0} < t\ui{1} < \ldots < t\ui{N-1} < t\ui{N} := L$ 
		of the interval $[0, L]$ and vectors $\xi\ui{n}\in \R^3\setminus \{0\}$ such that 
		\begin{align}\label{uandxi}
		u' = {\xi}\ui{n} \quad  \text{on $(t\ui{n-1}, t\ui{n})$\quad for $n=1, \ldots, N$.}
		\end{align}
		
	\textit{Step~1: The case without reversions.} First, we will prove the statement under the assumption that neither two $\xi\ui{n}$ and $\xi\ui{n+1}$ from~\eqref{uandxi} are anti-parallel. Without loss of generality, it suffices to detail the case $N=2$, where $u'$ takes only the two values $\xi\ui{1}$ and $\xi\ui{2}$. For general $N$, one can simply repeat the same construction.  
		
		\textit{Step 1a: Definition of suitable B\'ezier curves.}
		For $\eta>0$ sufficiently small, we choose $2k+1$ 
		control points around $u(t\ui{1})$ by  
		\begin{align}\label{controlpoints}
		u(t_{\eta, m}) \text{ with $t_{\eta, m} := t\ui{1} - (k-m)\tfrac{\eta}{k}$ for $m=0,...,2k$. }
		\end{align}
		Then, 
		\begin{align}\label{choice_control_points}
				u(t_{\eta, m+1}) - u(t_{\eta, m}) =\begin{cases}
				\tfrac{\eta}{k} \xi\ui{1} &\text{ if } m\in\{0,\ldots,k-1\},\\ 
				\tfrac{\eta}{k} \xi\ui{2} &\text{ if } m\in \{k,\ldots, 2k-1\}.
			\end{cases}
		\end{align}
		Based on the control points in~\eqref{controlpoints}, we consider the B\'ezier curve $B_\eta: \R\to \R^3$ by
		\begin{align}\label{def:Bezier}
			B_\eta(t) = \sum_{m=0}^{2k} b_{m,2k}(t) u(t_{\eta,m}), \qquad  t\in\R,
		\end{align}
		where $b_{q,p}:\R\to\R$ are the Bernstein polynomials, cf.~Lemma \ref{lem:bernstein}. 
		\begin{figure}[h!]
			\centering
			\begin{tikzpicture}[scale =0.5]
				\node (bezier) at (0,0) {\includegraphics[width=0.5\textwidth]{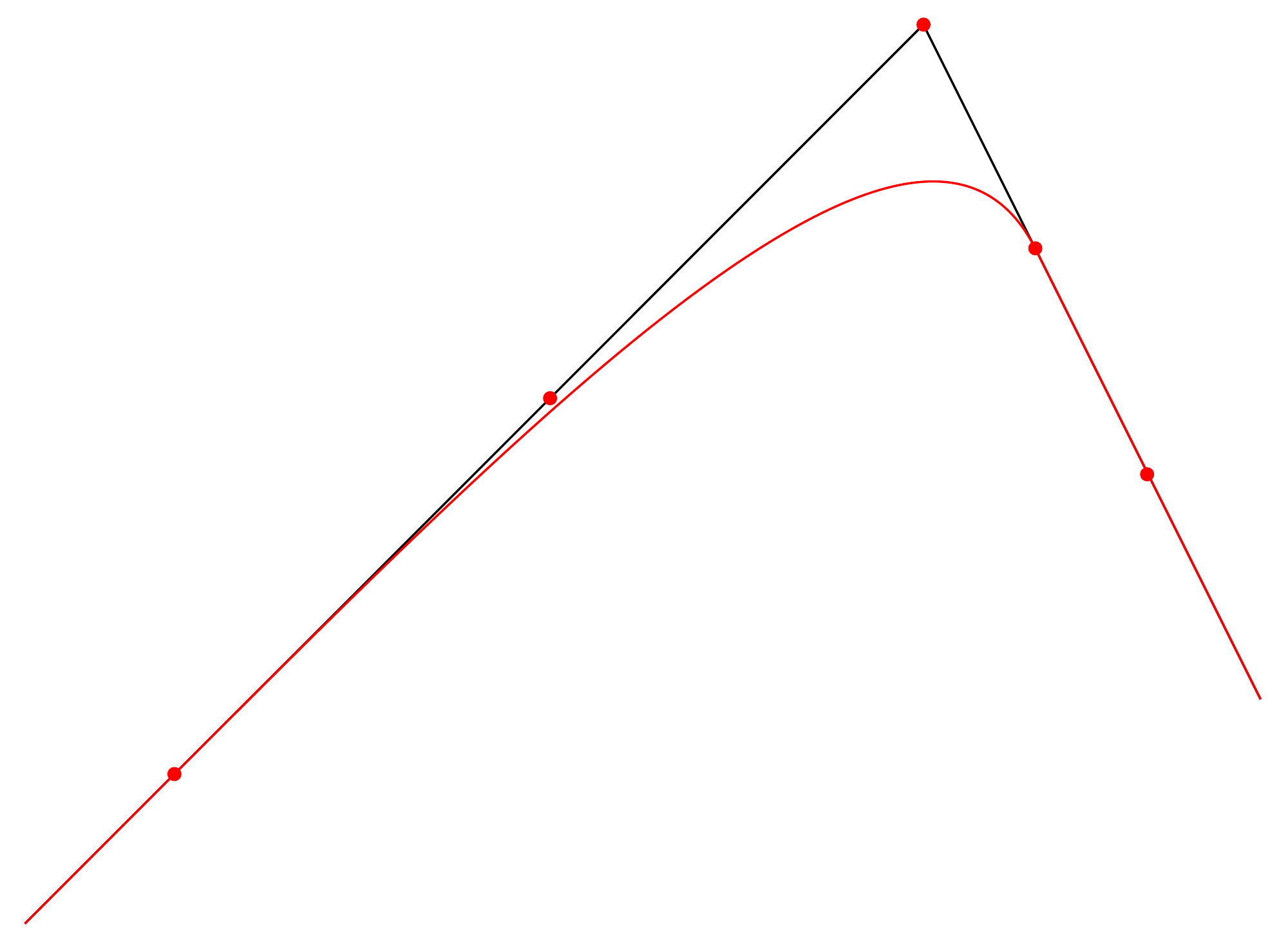}};
				\draw (-6.1,-3.5) node [anchor=east] {$u(t_{\eta,0})$};
				\draw (-1.3,1.2) node [anchor=east] {$u(t_{\eta,1})$};
				\draw (6.3,6.3) node [anchor=east] {$u(t_{\eta,2}) = u(t\ui{1})$};
				\draw (7.5,3) node [anchor=east] {$u(t_{\eta,3})$};
				\draw (9,0.1) node [anchor=east] {$u(t_{\eta,4})$};
				\draw (0.8,3.4) node [anchor=east] {$u$};
				\draw (3.8,2.8) node [red,anchor=east] {$u_\eta$};
			\end{tikzpicture}
			\caption{Illustration of $u_\eta$ 
			 for the case $k=2$.}\label{Bezier}
		\end{figure}
		
		After suitable reparametrization,~\eqref{def:Bezier} provides a mollification of $u$ via
		\begin{align}\label{ueta}
			u_{\eta}(t)= \begin{cases}
				B_{\eta}\bigl(\tfrac{t-t_{\eta, 0}}{2\eta}\bigr), &\text{ for $t\in [t\ui{1}-\eta,t\ui{1} +\eta]=[t_{\eta, 0}, t_{\eta, 2k}]$,} \\ u(t), &\text{ otherwise, } \\
			\end{cases}\qquad t\in [0,L],
		\end{align}	
			see Figure~\ref{Bezier}.
				
\textit{Step 1b: Regularity of $u_\eta$.} 
		Next, we verify that $u_{\eta}$ as constructed in Step~1a is indeed $k$-times continuosly differentiable on $[0,L]$. 
Indeed, it is enough to check that
		\begin{align}\label{derivative1}
		B_\eta'(0) = 2\eta \xi\ui{1} \quad \text{and}\quad B_\eta'(1) = 2\eta\xi\ui{2}, 
		\end{align}
		and that for any $j\in \N$ with $2\leq j \leq k$, 
		\begin{align}\label{derivativej}
		\frac{\dd^j}{\dd t^j}B_\eta(0)  = \frac{\dd^j}{\dd t^j}B_\eta(1)= 0.
		\end{align}
		As for~\eqref{derivative1}, we obtain with the help of Lemma~\ref{lem:bernstein}\,a),\,c) and \eqref{choice_control_points}  
		that for all $t\in \R$,
		\begin{align}\label{Bprime}
			B_\eta'(t) &= 2k \sum_{m=0}^{2k} (b_{m-1,2k-1}(t) - b_{m,2k-1}(t)) u(t_{\eta,m}) \nonumber\\
						&= 2k\sum_{m=0}^{2k-1}b_{m,2k-1}(t)\left(u(t_{\eta,m+1}) - u(t_{\eta,m})\right)\\
					&= 2\eta \Bigl(\sum_{m=0}^{k-1}b_{m,2k-1}(t)\xi\ui{1} + \sum_{m=k}^{2k-1}b_{m,2k-1}(t)\xi\ui{2}\Bigr) \nonumber\\
&= 2\eta (\lambda(t)\xi\ui{1} + (1-\lambda(t))\xi\ui{2}), \nonumber 
		\end{align}
where $\lambda (t):= \sum_{m=0}^{k-1} b_{m,2k-1}(t) \in [0,1]$ for $t\in \R$.
	Due to~Lemma~\ref{lem:bernstein}\,b), $\lambda(0) = 1$ and $\lambda(1) = 0$, which yields~\eqref{derivative1}. 
	
	Similar calculations, invoking again the properties of Bernstein polynomials, in particular Lemma~\ref{lem:bernstein}\,d), 
	give~\eqref{derivativej}. 
		
		\textit{Step 1c: Uniform bounds and convergence of $(u_\eta)_{\eta}$.} 
		As a consequence of~\eqref{Bprime}, the first derivative of 
			 $u_\eta$ stays within the line segment connecting $\xi\ui{1}$ and $\xi\ui{2}$, or in other words, is a convex combination of these two vectors; formally, 
	\begin{align*} 
	u_\eta'\in [\xi\ui{1}, \xi\ui{2}]:= \{\xi\in \R^3: \xi= \lambda\xi\ui{1} + (1-\lambda) \xi\ui{2} \text{with $\lambda\in [0,1]$}\}. 
	\end{align*}

		Since $\xi\ui{1}$ and $\xi\ui{2}$ are not antiparallel, it follows that $0\notin [\xi\ui{1}, \xi\ui{2}]$. Hence, in view of the compactness of the line segment $[\xi\ui{1}, \xi\ui{2}]$, 
 one can find constants $c,C>0$ independent of $\eta$ such that 
		\begin{align*}
			c\leq |u_{\eta}'(t)| \leq C
		\end{align*}
		for all $t\in [0, L]$.
		Moreover, along with \eqref{ueta}, 
		\begin{align*}
			\int_{0}^L |u' - u_{\eta}'|^2 \dd{t} \leq 2\eta (C + |\xi\ui{1}| + |\xi\ui{2}|)^2,  
		\end{align*} 
		and therefore,
		\begin{align*}
			u_{\eta} \to u \text{ in $H^{1}(0,L;\R^3)$ as $\eta\to 0$}
		\end{align*}
		by Poincar\'e's inequality.
	After passing to a suitable discrete sequence, this completes the proof of statement under the assumption that the curve $u$ is free of reversion. 
	
 \textit{Step 2: The general case with reversions.} The idea is to reduce the argument to the situation of Step~1 via a loop construction and to conclude with a diagonalization argument.  

In the following, let $I$ stand for the index set consisting of all $n\in \{1, \ldots, N-1\}$ such that $\xi\ui{n}$ and $\xi\ui{n+1}$ are anti-parallel, that is,
		\begin{align*}
			\xi\ui{n+1} = -\nu_n \xi\ui{n} 
		\end{align*} 
		for some $\nu_n> 0$. 
		
		\textit{Step~2a: Loop construction.} Without loss of generality, $I$ is a singleton, say $I=\{1\}$; otherwise the argument below can be performed analogously for all (finitely many) elements in $I$. Besides, as in Step~1, we take $N=2$ to keep notations simple. 
	
	For $\delta>0$ sufficiently small, define $u_\delta\in A_{\rm pw}(0, L;\R^3)$ via linear interpolation such that 
		\begin{align}\label{def_udelta}
			u_\delta = u\quad \text{on $[0, t\ui{1}-\delta] \cup [t\ui{1} +\delta \sigma, L]$} \quad \text{and}\quad 
						u_\delta(t\ui{1})	= u(t\ui{1}) + \delta \xi\ui{1}_\perp,
		\end{align}
			where $\xi\ui{1}_\perp$ is a non-zero vector orthogonal to $\xi\ui{1}$, and $\sigma = 1$ if $\nu_1\neq 1$ and $\sigma = \frac{1}{2}$ if $\nu_1=1$, 
		see Figure~\ref{schlaufe}. 
		
		\begin{figure}[h!]
			\centering
			\begin{tikzpicture}[scale=0.8]
				\draw (0,0) -- (5,5);
				\fill[black] (5,5) circle (0.5mm) node[anchor=south] {$u(t\ui{1})$};
				\fill[black] (3,3) circle (0.5mm) node[anchor=south east] {$u(t\ui{1}+\delta)$};
				\fill[black] (1,1) circle (0.5mm) node[anchor=south east] {$u(t\ui{1}-\delta)$};
				\draw[ultra thick,->,red] (1,1)--(1.8,1.8) node[anchor = north west] {$\xi\ui{1}$};
				\draw[ultra thick,->,red] (3,3) node[anchor =north  west] {$\xi\ui{2}$} --(2.5,2.5);
				\draw (2,5) node {a)};
				
				\begin{scope}[shift={(8,0)}]
					\draw (0,0) -- (3,3);
					\draw[dashed] (3,3) -- (5,5);
					\draw[dashed] (5,5) -- (7,3);
					\draw (3,3) -- (7,3);
					\draw (1,1) -- (7,3);
					\draw (2,5) node {b)};
					\draw[ultra thick,->,red] (0,0)--(0.8,0.8) node[anchor = east] {$\xi\ui{1}$};
					\draw[ultra thick,->,red] (3,3)--(2.5,2.5) node[anchor = east] {$\xi\ui{2}$};
					\draw[ultra thick,->,red] (1,1)--++(18.43:0.707cm) node [anchor = north west] {$\xi\ui{1}+\xi\ui{1}_\perp$};
					\draw[ultra thick,->,red] (7,3)--(5.7,3);
					\draw (4.8,3) node [red,anchor=south] {$\xi\ui{2} -\xi\ui{1}_\perp$};
					\fill[black] (5,5) circle (0.5mm);
					\fill[black] (7,3) circle (0.5mm) node[anchor=north] {$u_\delta(t\ui{1})$};
					\fill[black] (3,3) circle (0.5mm) node[anchor=south east] {$u_\delta(t\ui{1}+\delta)$};
					\fill[black] (1,1) circle (0.5mm) node[anchor=south east] {$u_\delta(t\ui{1}-\delta)$};
				\end{scope}
			\end{tikzpicture}
			\caption{ Illustration of  a) a curve $u$ that reverses its path at the point $t\ui{1}$; b) the modified curve $u_\delta$ resulting from the loop construction in the case $\nu_1 \neq 1$.}\label{schlaufe}
		\end{figure}
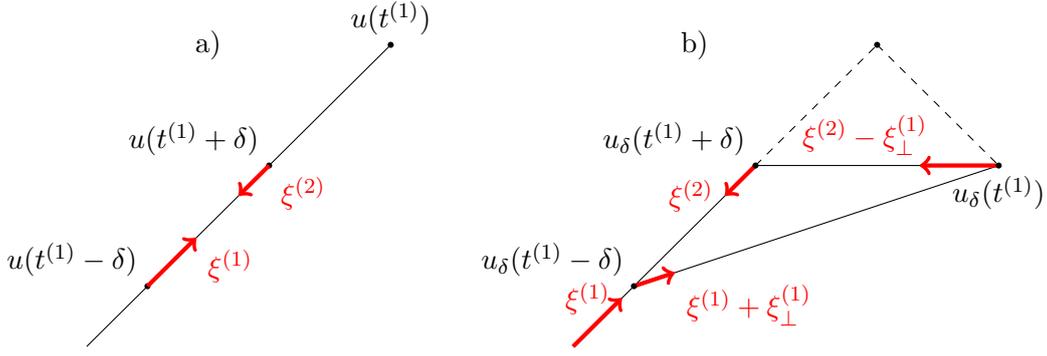
  By design, 
		\begin{align*} 
		u_\delta'\in \{\xi\ui{1}, \xi\ui{2}, \xi\ui{1} + \xi_\perp\ui{1},  \xi\ui{2} - \tfrac{1}{\sigma} \xi\ui{1}_\perp \}\quad \text{a.e.~in $[0, L]$,}
		\end{align*}
so that in particular, 
\begin{align}\label{bound_udelta}
c\leq \|u_\delta'\|_{C^0([0,L];\R^3)}\leq C,
\end{align} 
with constants $c, C>0$ depending only on $u$, and thus, independent of $\delta$. 
		Therefore, since $u_\delta$ differs from $u$ only on a set of measure $\delta(1+\sigma)$, we conclude together with Poincar\'e's inequality that
		\begin{align}\label{convergence_udelta}
		u_\delta \to u\quad \text{ in $H^1(0,L;\R^3)$ as $\delta\to 0$. } 
		\end{align}
		
 \textit{Step 2b: Diagonalization.} 
		By applying the results of Step~1 to each $u_\delta$ and accounting for~\eqref{bound_udelta} and~\eqref{def_udelta}, we obtain sequences $(u_{\delta, i})_i\subset C^k([0, L];\R^3)$ such that 
				\begin{itemize}
		\item[$(i)_\delta$] $c\leq \|u_{\delta, i}'\|_{C^0([0,L];\R^3)}\leq C$ for all $i\in \N$ and $\delta>0$ with constants $c,C>0$ depending on $u$;
		\item[$(ii)_\delta$] $u_{\delta, i} = u \text{ on } [0,L]\setminus \Gamma_{2i}^\delta$, where
		 $\Gamma_i^\delta: = \{t\in [0,L] : \dist(t,\Gamma^\delta) \leq \tfrac{1}{i}\}$ for $i$ sufficiently large and $\Gamma^\delta$ denotes the set of points in $(0, L)$ where $u_\delta$ is not differentiable;
		\item[$(iii)_\delta$] $u_{\delta, i} \to u_\delta$ in $H^{1}(0,L;\R^3)$ as $i\to \infty$;
		\end{itemize}	
		In consideration of~\eqref{convergence_udelta}, $(ii)_\delta$ and $(ii)_\delta$, we can pick a diagonal sequence $(u_i)_i\subset C^k([0,L];\R^3)$ with $u_i:= u_{\delta(i), i}$ such that $\Gamma_{2i}^{\delta(i)} \subset \Gamma_i$ for all $i\in \N$, and $u_i \to u$ in $H^{1}(0,L;\R^3)$ as $i\to \infty$ by Attouch's lemma, which proves the statement.  
	\end{proof}
	 
The next lemma gathers some basic facts about Bernstein polynomials, which were an important ingredient in the definition of B\'ezier curves in the previous proof. For more details, we refer the reader e.g.~to \cite{BDS11, Far02}. 

	\begin{lemma}\label{lem:bernstein} For $q\in \Z$ and $p\in \N$, 
	 let $b_{q,p} : \R\to \R$  be the corresponding Bernstein polynomial, i.e.,
		\begin{align*}
			b_{q,p}(t) =\begin{cases} \displaystyle {{p}\choose{q}} (1-t)^{p-q}t^q & \text{if $q\leq p$ and $q\geq 0$,}\\ 0 & \text{otherwise,}
			\end{cases} \qquad  t\in\R.
		\end{align*}

	 Then the following properties hold: 
		\begin{itemize}
			\item[a)] (binomial theorem)  
				\begin{align*} 
					\sum_{m=0}^{p} b_{m,p} = 1;
				\end{align*}
					\item[b)] (values at $0$ and $1$) 
				\begin{align*} 
					b_{q,p}(0) = \begin{cases} 1 & \text{if $q=0,$}\\ 0 & \text{if $q\neq 0$,}\end{cases}\quad \text{and} \quad b_{q,p}(1) = \begin{cases} 1 & \text{if $q=p$,}\\ 0 & \text{if $q\neq p$;}\end{cases} 
				\end{align*}  
			\item[c)] (first derivative) 
				\begin{align*}
					b_{q,p}' = p(b_{q-1,p-1}-b_{q,p-1});
				\end{align*}
			\item[d)] (higher-order derivatives) 
				\begin{align*}
					\frac{\dd^j}{\dd t^j}b_{q,p}= \frac{p!}{(p-j)!}\sum_{m=\max\{0,q-p+j\}}^{\min\{j,q\}} (-1)^{m+j} {{j}\choose m} b_{q-m,p-j} 
				\end{align*} 
				for any natural number $j\leq p$.  
		\end{itemize}
	\end{lemma}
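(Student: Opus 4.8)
The plan is to verify the four properties by entirely elementary manipulations: (a) and (b) are read off from the definition of $b_{q,p}$, (c) is a direct differentiation combined with two standard binomial identities, and (d) is an induction on $j$ built on (c). For (a) one recognizes $\sum_{m=0}^{p} b_{m,p}(t)=\sum_{m=0}^{p}\binom{p}{m}(1-t)^{p-m}t^{m}$ as the binomial expansion of $\bigl((1-t)+t\bigr)^{p}=1$. For (b), evaluating the definition at $t=0$ gives $b_{q,p}(0)=\binom{p}{q}\,0^{q}$ for $0\le q\le p$, which equals $1$ precisely when $q=0$ (using $0^{0}=1$) and $0$ otherwise; the value at $t=1$ is obtained symmetrically from the factor $(1-t)^{p-q}$, and when $q\notin\{0,\dots,p\}$ both identities hold trivially since $b_{q,p}\equiv 0$.

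For (c) it suffices to treat $0\le q\le p$, the remaining cases being the trivial identity $0=0$. The product rule applied to $b_{q,p}(t)=\binom{p}{q}(1-t)^{p-q}t^{q}$ gives
\begin{align*}
	b_{q,p}'(t)=\binom{p}{q}\Bigl(q\,(1-t)^{p-q}t^{q-1}-(p-q)(1-t)^{p-q-1}t^{q}\Bigr),
\end{align*}
and the identities $q\binom{p}{q}=p\binom{p-1}{q-1}$ and $(p-q)\binom{p}{q}=p\binom{p-1}{q}$ let one rewrite the right-hand side as $p\bigl(b_{q-1,p-1}(t)-b_{q,p-1}(t)\bigr)$; the boundary cases $q=0$ and $q=p$ are consistent with the convention $b_{-1,p-1}=b_{p,p-1}=0$.

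Part (d) then follows by induction on $j\ge 1$, the base case $j=1$ being exactly (c). For the inductive step I would differentiate the formula for $j$ term by term using (c), so that each $b_{q-m,p-j}$ is replaced by $(p-j)\bigl(b_{q-m-1,p-j-1}-b_{q-m,p-j-1}\bigr)$; shifting the summation index $m\mapsto m-1$ in the sum carrying the $b_{q-m-1,p-j-1}$ terms and combining the two resulting sums via Pascal's rule $\binom{j}{m-1}+\binom{j}{m}=\binom{j+1}{m}$ together with $\tfrac{p!}{(p-j)!}(p-j)=\tfrac{p!}{(p-j-1)!}$ produces the formula for $j+1$. Finally one restricts the summation to those $m$ for which $b_{q-m,p-j}$ is not identically zero, i.e.\ $0\le q-m\le p-j$, which combined with $0\le m\le j$ yields the stated bounds $\max\{0,q-p+j\}\le m\le\min\{j,q\}$. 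None of these steps carries any conceptual difficulty; the only thing demanding genuine care — and the part I expect to be the main obstacle, in the narrow sense of bookkeeping rather than depth — is keeping precise track in (d) of which shifted Bernstein polynomials vanish and of the index shift and Pascal step, so that the summation limits emerge exactly as claimed.
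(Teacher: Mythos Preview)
Your argument is correct. Parts (a)--(c) are verified exactly as you describe, and the induction in (d) goes through: extending the sums to all integers (using that $\binom{j}{m}=0$ for $m\notin\{0,\dots,j\}$ and $b_{q-m,p-j}=0$ for $q-m\notin\{0,\dots,p-j\}$) makes the index shift and the Pascal combination clean, and restricting at the end to the nonvanishing terms recovers the stated summation limits.

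As for comparison with the paper: the paper does not prove this lemma at all. It is presented as a list of standard facts about Bernstein polynomials, with a pointer to the literature (e.g.\ textbooks on B\'ezier curves) for details. Your write-up therefore supplies what the paper deliberately omits; there is no alternative approach in the paper to compare against.
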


\subsection*{Acknowledgements} 
CK acknowledges partial financial support by the UU Westerdijk fellowship program. 
%%%%%%%%%%%%%%%%%%%%%%%% BIBLIOGRAPHY %%%%%%%%%%%%%%%%%%%%%%%%%%%

\bibliographystyle{abbrv}
\bibliography{Dimensionreduction}

\begin{thebibliography}{10}

\bibitem{ABP91}
E.~Acerbi, G.~Buttazzo, and D.~Percivale.
\newblock A variational definition of the strain energy for an elastic string.
\newblock {\em J. Elasticity}, 25(2):137--148, 1991.

\bibitem{AgD17a}
V.~Agostiniani and A.~DeSimone.
\newblock Dimension reduction via {$\Gamma$}-convergence for soft active
  materials.
\newblock {\em Meccanica}, 52(14):3457--3470, 2017.

\bibitem{Att84}
H.~Attouch.
\newblock {\em Variational convergence for functions and operators}.
\newblock Applicable Mathematics Series. Pitman (Advanced Publishing Program),
  Boston, MA, 1984.

\bibitem{BaB06}
J.-F. Babadjian and M.~Ba\'{\i}a.
\newblock 3{D}--2{D} analysis of a thin film with periodic microstructure.
\newblock {\em Proc. Roy. Soc. Edinburgh Sect. A}, 136(2):223--243, 2006.

\bibitem{BaF05}
J.-F. Babadjian and G.~A. Francfort.
\newblock Spatial heterogeneity in 3{D}-2{D} dimensional reduction.
\newblock {\em ESAIM Control Optim. Calc. Var.}, 11(1):139--160, 2005.

\bibitem{BFM03}
G.~Bouchitt\'{e}, I.~Fonseca, and M.~L. Mascarenhas.
\newblock Bending moment in membrane theory.
\newblock {\em J. Elasticity}, 73(1-3):75--99 (2004), 2003.

\bibitem{BFM09}
G.~Bouchitt\'{e}, I.~Fonseca, and M.~L. Mascarenhas.
\newblock The {C}osserat vector in membrane theory: a variational approach.
\newblock {\em J. Convex Anal.}, 16(2):351--365, 2009.

\bibitem{Bra02}
A.~Braides.
\newblock {\em {$\Gamma$}-convergence for beginners}, volume~22 of {\em Oxford
  Lecture Series in Mathematics and its Applications}.
\newblock Oxford University Press, Oxford, 2002.

\bibitem{BFF00}
A.~Braides, I.~Fonseca, and G.~Francfort.
\newblock 3{D}-2{D} asymptotic analysis for inhomogeneous thin films.
\newblock {\em Indiana Univ. Math. J.}, 49(4):1367--1404, 2000.

\bibitem{Cia97}
P.~Ciarlet.
\newblock {\em Mathematical Elasticity: Theory of Plates}.
\newblock Developments in Aquaculture and Fisheries Science. North-Holland,
  1997.

\bibitem{CoD06}
S.~Conti and G.~Dolzmann.
\newblock Derivation of elastic theories for thin sheets and the constraint of
  incompressibility.
\newblock In {\em Analysis, modeling and simulation of multiscale problems},
  pages 225--247. Springer, Berlin, 2006.

\bibitem{CoD09}
S.~Conti and G.~Dolzmann.
\newblock {$\Gamma$}-convergence for incompressible elastic plates.
\newblock {\em Calc. Var. Partial Differential Equations}, 34(4):531--551,
  2009.

\bibitem{Dac08}
B.~Dacorogna.
\newblock {\em Direct methods in the calculus of variations}, volume~78 of {\em
  Applied Mathematical Sciences}.
\newblock Springer, New York, second edition, 2008.

\bibitem{Dal93}
G.~Dal~Maso.
\newblock {\em An introduction to {$\Gamma$}-convergence}, volume~8 of {\em
  Progress in Nonlinear Differential Equations and their Applications}.
\newblock Birkh\"{a}user Boston, Inc., Boston, MA, 1993.

\bibitem{Dav14}
E.~Davoli.
\newblock Quasistatic evolution models for thin plates arising as low energy
  {$\Gamma$}-limits of finite plasticity.
\newblock {\em Math. Models Methods Appl. Sci.}, 24(10):2085--2153, 2014.

\bibitem{DaM12}
E.~Davoli and M.~G. Mora.
\newblock Convergence of equilibria of thin elastic rods under physical growth
  conditions for the energy density.
\newblock {\em Proc. Roy. Soc. Edinburgh Sect. A}, 142(3):501--524, 2012.

\bibitem{DaM13}
E.~Davoli and M.~G. Mora.
\newblock A quasistatic evolution model for perfectly plastic plates derived by
  {$\Gamma$}-convergence.
\newblock {\em Ann. Inst. H. Poincar\'{e} Anal. Non Lin\'{e}aire},
  30(4):615--660, 2013.

\bibitem{BDS11}
E.~H. Doha, A.~H. Bhrawy, and M.~A. Saker.
\newblock On the derivatives of {B}ernstein polynomials: an application for the
  solution of high even-order differential equations.
\newblock {\em Bound. Value Probl.}, pages Art. ID 829543, 16, 2011.

\bibitem{EnK19}
D.~Engl and C.~Kreisbeck.
\newblock Theories for incompressible strings: A rigorous derivation via
  {$\Gamma$}-convergence.
\newblock {\em in preparation}.

\bibitem{Far02}
G.~E. Farin.
\newblock {\em Curves and Surfaces for CAGD : A Practical Guide.}, volume 5th
  ed of {\em The Morgan Kaufmann Series in Computer Graphics and Geometric
  Modeling}.
\newblock Morgan Kaufmann, 2002.

\bibitem{FeF12}
R.~Ferreira and I.~Fonseca.
\newblock Characterization of the multiscale limit associated with bounded
  sequences in {$BV$}.
\newblock {\em J. Convex Anal.}, 19(2):403--452, 2012.

\bibitem{FeZ19}
R.~Ferreira and E.~Zappale.
\newblock Bending-torsion moments in thin multi-structures in the context of
  nonlinear elasticity.
\newblock {\em Preprint, arXiv:1712.02598}, 2019.

\bibitem{FJM06}
G.~Friesecke, R.~D. James, and S.~M\"uller.
\newblock A hierarchy of plate models derived from nonlinear elasticity by
  gamma-convergence.
\newblock {\em Arch. Ration. Mech. Anal.}, 180(2):183--236, 2006.

\bibitem{GiJ97}
G.~Gioia and R.~D. James.
\newblock Micromagnetics of very thin films.
\newblock {\em Proceedings: Mathematical, Physical and Engineering Sciences},
  453(1956):213--223, 1997.

\bibitem{Kre13}
C.~Kreisbeck.
\newblock Another approach to the thin-film {$\Gamma$}-limit of the
  micromagnetic free energy in the regime of small samples.
\newblock {\em Quart. Appl. Math.}, 71(2):201--213, 2013.

\bibitem{Kre17}
C.~Kreisbeck.
\newblock A note on 3d-1d dimension reduction with differential constraints.
\newblock {\em Discrete Contin. Dyn. Syst. Ser. S}, 10(1):55--73, 2017.

\bibitem{KrK16}
C.~Kreisbeck and S.~Kr\"{o}mer.
\newblock Heterogeneous thin films: combining homogenization and dimension
  reduction with directors.
\newblock {\em SIAM J. Math. Anal.}, 48(2):785--820, 2016.

\bibitem{KrR15}
C.~Kreisbeck and F.~Rindler.
\newblock Thin-film limits of functionals on {$\mathcal{A}$}-free vector
  fields.
\newblock {\em Indiana Univ. Math. J.}, 64(5):1383--1423, 2015.

\bibitem{LPS15}
G.~Lazzaroni, M.~Palombaro, and A.~Schl\"{o}merkemper.
\newblock A discrete to continuum analysis of dislocations in nanowire
  heterostructures.
\newblock {\em Commun. Math. Sci.}, 13(5):1105--1133, 2015.

\bibitem{LeM05}
H.~Le~Dret and N.~Meunier.
\newblock Modeling heterogeneous martensitic wires.
\newblock {\em Math. Models Methods Appl. Sci.}, 15(3):375--406, 2005.

\bibitem{LeR95}
H.~Le~Dret and A.~Raoult.
\newblock The nonlinear membrane model as variational limit of nonlinear
  three-dimensional elasticity.
\newblock {\em J. Math. Pures Appl. (9)}, 74(6):549--578, 1995.

\bibitem{ChL13}
H.~Li and M.~Chermisi.
\newblock The von {K}\'arm\'an theory for incompressible elastic shells.
\newblock {\em Calc. Var. Partial Differential Equations}, 48(1-2):185--209,
  2013.

\bibitem{MoS12}
M.~G. Mora and L.~Scardia.
\newblock Convergence of equilibria of thin elastic plates under physical
  growth conditions for the energy density.
\newblock {\em J. Differential Equations}, 252(1):35--55, 2012.

\bibitem{Ogd72}
R.~W. Ogden.
\newblock Large deformation isotropic elasticity - on the correlation of theory
  and experiment for incompressible rubberlike solids.
\newblock {\em Proceedings of the Royal Society of London. Series A,
  Mathematical and Physical Sciences}, 326(1567):565--584, 1972.

\bibitem{OlR17}
H.~Olbermann and E.~Runa.
\newblock Interpenetration of matter in plate theories obtained as
  {$\Gamma$}-limits.
\newblock {\em ESAIM Control Optim. Calc. Var.}, 23(1):119--136, 2017.

\bibitem{Sca06}
L.~Scardia.
\newblock The nonlinear bending-torsion theory for curved rods as
  {$\Gamma$}-limit of three-dimensional elasticity.
\newblock {\em Asymptot. Anal.}, 47(3-4):317--343, 2006.

\bibitem{Sch07}
B.~Schmidt.
\newblock Minimal energy configurations of strained multi-layers.
\newblock {\em Calc. Var. Partial Differential Equations}, 30(4):477--497,
  2007.

\bibitem{Tra06}
K.~Trabelsi.
\newblock Modeling of a membrane for nonlinearly elastic incompressible
  materials via {$\Gamma$}-convergence.
\newblock {\em Anal. Appl. (Singap.)}, 4(1):31--60, 2006.

\bibitem{TrV96}
L.~Trabucho and J.~M. Via\~{n}o.
\newblock Mathematical modelling of rods.
\newblock In {\em Handbook of numerical analysis, {V}ol. {IV}}, Handb. Numer.
  Anal., IV, pages 487--974. North-Holland, Amsterdam, 1996.

\bibitem{Wal00}
W.~Walter.
\newblock {\em Gew\"ohnliche Differentialgleichungen: Eine Einf{\"u}hrung}.
\newblock Springer-Verlag Berlin Heidelberg, 7th edition, 2000.

\bibitem{Zor06}
H.~Zorgati.
\newblock A {$\Gamma$}-convergence result for thin curved films bonded to a
  fixed substrate with a noninterpenetration constraint.
\newblock {\em Chinese Ann. Math. Ser. B}, 27(6):615--636, 2006.

\end{thebibliography}
\end{document}